\newtheorem{theorem}{Theorem}[section]
\newtheorem{corollary}[theorem]{Corollary}
\newtheorem{lemma}[theorem]{Lemma}
\theoremstyle{definition}
\newtheorem{definition}[theorem]{Definition}
\newtheorem{fact}[theorem]{Fact}
\newcommand{\fra}{\mathfrak{a}}
\newcommand{\veq}{\rotatebox[origin=c]{-90}{$ =\ \ $}}
\DeclareMathOperator{\dom}{dom}
\DeclareMathOperator{\ran}{ran}
\DeclareMathOperator{\fin}{fin}
\DeclareMathOperator{\seq}{seq^{1-1}}
\title[The finitary partitions with $n$ non-singleton blocks]{The finitary partitions with $n$ non-singleton blocks\\ of a set}
\author{Yifan Hu}
\address{School of Philosophy\\
Wuhan University\\
No.~299 Bayi Road\\
Wuhan 430072\\
Hubei Province\\
People's Republic of China}
\email{kdygqh@qq.com}
\author{Guozhen Shen}
\address{School of Philosophy\\
Wuhan University\\
No.~299 Bayi Road\\
Wuhan 430072\\
Hubei Province\\
People's Republic of China}
\email{shen\_guozhen@outlook.com}
\thanks{Shen was partially supported by NSFC No.~12101466.}
\subjclass[2010]{Primary 03E10; Secondary 03E25}
\keywords{$\mathsf{ZF}$, cardinal, finitary partition, axiom of choice}
\begin{document}

\begin{abstract}
A partition is finitary if all its blocks are finite. For a cardinal $\mathfrak{a}$ and a natural number $n$, let $\mathrm{fin}(\mathfrak{a})$ and  $\mathscr{B}_{n}(\mathfrak{a})$ be the cardinalities of the set of finite subsets and the set of finitary partitions with exactly $n$ non-singleton blocks of a set which is of cardinality~$\mathfrak{a}$, respectively. In this paper, we prove in $\mathsf{ZF}$ (without the axiom of choice) that for all infinite cardinals $\mathfrak{a}$ and all non-zero natural numbers $n$, 
\[
(2^{\mathscr{B}_{n}(\mathfrak{a})})^{\aleph_0}=2^{\mathscr{B}_{n}(\mathfrak{a})}
\]
and 
\[
2^{\mathrm{fin}(\mathfrak{a})^n}=2^{\mathscr{B}_{2^n-1}(\mathfrak{a})}.
\]
It is also proved consistent with $\mathsf{ZF}$ that there exists an infinite cardinal $\mathfrak{a}$ such that 
\[
2^{\mathscr{B}_{1}(\mathfrak{a})}<2^{\mathscr{B}_{2}(\mathfrak{a})}<2^{\mathscr{B}_{3}(\mathfrak{a})}<\cdots<2^{\mathrm{fin}(\mathrm{fin}(\mathfrak{a}))}.
\]
\end{abstract}

\maketitle

\section{Introduction}
A partition is finitary if all its blocks are finite. For a cardinal $\mathfrak{a}$ and a natural number $n$, let $\mathrm{fin}(\mathfrak{a})$,  $\mathscr{B}_{\mathrm{fin}}(\mathfrak{a})$, and $\mathscr{B}_{n}(\mathfrak{a})$ be the cardinalities of the set of finite subsets, the set of finitary partitions with finitely many non-singleton blocks, and the set of finitary partitions with exactly $n$ non-singleton blocks of a set which is of cardinality~$\mathfrak{a}$, respectively. For cardinals $\fra$ and $\mathfrak{b}$, we write  $\fra\leqslant^\ast\mathfrak{b}$ if there are sets $A$ and $B$ of cardinalities $\fra$ and $\mathfrak{b}$, respectively, such that there is a  surjection from a subset of $B$ onto $A$.

In 1961, using Ramsey's theorem and a subtle coding technique, L\"auchli~\cite{Lauchli1961} proved in $\mathsf{ZF}$ (i.e., the Zermelo--Fraenkel set theory without the axiom of choice) that for all infinite cardinals $\mathfrak{a}$,
\[
(2^{\fin(\mathfrak{a})})^{\aleph_0}=2^{\fin(\mathfrak{a})},
\]
which is called L\"auchli's lemma in \cite{Halbeisen}.
As a consequence,  $2^{2^{\mathfrak{a}}}+2^{2^{\mathfrak{a}}}=2^{2^{\mathfrak{a}}}$. Recently, Sonpanow and Vejjajiva~\cite{sv2024} proved many analogs of this result with $2^{\mathfrak{a}}$ replaced by some other cardinals. In \cite{Shen2021}, the second author proved that for all infinite cardinals $\fra$ and all natural numbers $n$,
\begin{equation}\label{ss01}
	2^{\fin(\mathfrak{a})^n}=2^{[\fin(\mathfrak{a})]^n},
\end{equation}
whereas the existence of an infinite cardinal $\mathfrak{a}$ such that 
\[
2^{\fin(\mathfrak{a})}<2^{\fin(\mathfrak{a})^2}<2^{\fin(\mathfrak{a})^3}<\cdots<2^{\fin(\fin(\mathfrak{a}))}
\]
is consistent with $\mathsf{ZF}$.

In this paper, we improve the above results by showing that 
for all infinite cardinals $\mathfrak{a}$ and all non-zero natural numbers $n$, 
\begin{equation}\label{hs01}
	(2^{\mathscr{B}_{n}(\mathfrak{a})})^{\aleph_0}=2^{\mathscr{B}_{n}(\mathfrak{a})}
\end{equation}
and 
\begin{equation}\label{hs02}
	2^{\mathrm{fin}(\fra)^n}=2^{\mathscr{B}_{2^n-1}(\mathfrak{a})},
\end{equation}
whereas the existence of an infinite cardinal $\fra$ such that 
\[
2^{\mathscr{B}_{1}(\mathfrak{a})}<2^{\mathscr{B}_{2}(\mathfrak{a})}<2^{\mathscr{B}_{3}(\mathfrak{a})}<\cdots<2^{\mathrm{fin}(\mathrm{fin}(\mathfrak{a}))}
\]
is consistent with $\mathsf{ZF}$. 
The cardinal $\mathscr{B}_{\fin}(\fra)$ is introduced in~\cite{Shen2024}, where it is proved that $2^\fra\not=\mathscr{B}_{\fin}(\fra)$ for all infinite cardinals $\fra$. As a corollary of \eqref{hs02}, we obtain that 
\[
2^{\fin(\fin(\fra))}=2^{\mathscr{B}_{\fin}(\fra)},
\]
though the stronger statement $\fin(\fin(\fra))\leqslant^\ast\mathscr{B}_{\fin}(\fra)$ cannot be proved in $\mathsf{ZF}$. 

The main ideas of the proofs of \eqref{hs01} and \eqref{hs02} are as follows. First, we define a new cardinal $\mathscr{O}_n(\fra)$ related to $\fra$, which is the cardinality of the set of all ordered $n$-tuples of pairwise disjoint finite subsets of a set which is of cardinality~$\fra$. Then clearly $\mathscr{B}_{n}(\fra)\leqslant^\ast\mathscr{O}_n(\fra)$, and by combining the ideas in \cite{Lauchli1961} and \cite{Shen2021}, we prove that 
\begin{equation}\label{hs03}
	(2^{{\mathscr{O}_{n}(\mathfrak{a})}})^{\aleph_0}=2^{\mathscr{B}_{n}(\mathfrak{a})},
\end{equation}
which establishes \eqref{hs01}. Second, we show that 
\[
{\mathrm{fin}(\fra)^n}={\mathscr{O}_{2^n-1}(\mathfrak{a})},
\]
which, along with \eqref{hs03}, establishes \eqref{hs02}.

\section{Basic notions and facts }
We briefly indicate  our use of some terminology and notations. The cardinality of a set $A$ is denoted by $|A|$. We use lowercase Fraktur letters $\mathfrak{a},\mathfrak{b}$ for cardinals. 
For a function $f$, we denote the domain of $f$ by $\dom(f)$, the range of $f$ by $\ran(f)$, the image of $A$ under $f$ by $f[A]$, the inverse image of $A$ under $f$ by $f^{-1}[A]$, and the restriction of $f$ to $A$ by $f{\upharpoonright} A$. For functions $f$ and $g$, we use $g\circ f$ for the composition of $g$ and $f$.

For a set $A$, a partition of $A$ is a family $P$ of pairwise disjoint non-empty subsets of $A$ such that $\bigcup P=A$. Elements of partitions are also called blocks. We say that a partition $P$ is finitary if all blocks of $P$ are finite, and write ns$(P)$ for the set of non-singleton blocks of $P$.

\begin{definition}
	Let $A,B$ be arbitrary sets, let $\mathfrak{a}=|A|$, and let $\mathfrak{b}=|B|$.
	\begin{enumerate}
		
		\item $\mathfrak{a}\leqslant\mathfrak{b}$ means that there is an injection from $A$ into $B$. 
		\item $\mathfrak{a}\leqslant^{\ast}\mathfrak{b}$ means that there is a surjection from a subset of $B$ onto $A$.
		\item $\mathfrak{a}\nleqslant\mathfrak{b}$ ($\mathfrak{a}\nleqslant^{\ast}\mathfrak{b}$) denotes the negation of $\mathfrak{a}\leqslant\mathfrak{b}$ ($\mathfrak{a}\leqslant^{\ast}\mathfrak{b}$).
		\item $\mathfrak{a}<\mathfrak{b}$ means that $\mathfrak{a}\leqslant\mathfrak{b}$ and $\mathfrak{b}\nleqslant\mathfrak{a}$.
		\item $\mathfrak{a}=^{\ast}\mathfrak{b}$ means that $\mathfrak{a}\leqslant^{\ast}\mathfrak{b}$ and $\mathfrak{b}\leqslant^{\ast}\mathfrak{a}$.
	\end{enumerate}
\end{definition}

The Cantor--Bernstein theorem states that if $\mathfrak{a}\leqslant\mathfrak{b}$ and $\mathfrak{b}\leqslant\mathfrak{a}$ then $\mathfrak{a}=\mathfrak{b}$. Clearly, if $\mathfrak{a}\leqslant\mathfrak{b}$, then $\mathfrak{a}\leqslant^{\ast}\mathfrak{b}$, and if $\mathfrak{a}\leqslant^{\ast}\mathfrak{b}$, then $2^{\mathfrak{a}}\leqslant2^{\mathfrak{b}}$.

\begin{definition}
	Let $n,m_1,\dots,m_n\in\omega$. Let $A,B$ be arbitrary sets, let $\mathfrak{a}=|A|$, and let $\mathfrak{b}=|B|$.
	\begin{enumerate}	
		\item $A^{B}$ is the set of all injections from $B$ into $A$; $\mathfrak{a}^{\mathfrak{b}}=|A^{B}|$.
		
		\item $\mathrm{seq}(A)=\bigcup_{k\in\omega}A^{k}$; $\mathrm{seq}(\mathfrak{a})=|\mathrm{seq}(A)|$.
		\item $A^{\underline{B}}$ is the set of all injections from $B$ into $A$; $\mathfrak{a}^{\underline{\mathfrak{b}}}=|A^{\underline{B}}|$.
		
		\item $\seq(A)=\bigcup_{k\in\omega}A^{\underline{k}}$; $\seq(\mathfrak{a})=|\seq(A)|$.
		\item $[A]^B=\{C\subseteq A\mid |C|=|B|\}$; $[\mathfrak{a}]^{\mathfrak{b}}=|[A]^B|$.
		
		\item $\fin(A)=\bigcup_{k\in\omega}[A]^k$; $\fin(\mathfrak{a})=|\fin(A)|$.

		\item $\mathscr{B}_n(A)$ is the set of all finitary partitions of $A$ with exactly $n$ non-singleton blocks;  $\mathscr{B}_{n}(\mathfrak{a})=|\mathscr{B}_n(A)|$.
		
		\item $\mathscr{B}_{\fin}(A)=\bigcup_{k\in\omega}\mathscr{B}_k(A)$; $\mathscr{B}_{\fin}(\mathfrak{a})=|\mathscr{B}_{\fin}(A)|$.
		\item $\mathscr{O}_{m_1,\dots,m_n}(A)$ is the set  of all ordered $n$-tuples of pairwise disjoint  subsets of $A$ which are of cardinalities $m_1,\cdots,m_n$, respectively; $\mathscr{O}_{m_1,\dots,m_n}(\mathfrak{a})=|\mathscr{O}_{m_1,\dots,m_n}(A)|$.
		\item $\mathscr{O}_{n}(A)=\bigcup_{k_1,\dots,k_n\in\omega}\mathscr{O}_{k_1,\dots,k_n}(A)$; $\mathscr{O}_{n}(\mathfrak{a})=|\mathscr{O}_{n}(A)|$.
	\end{enumerate}
\end{definition}


\begin{fact}\label{bf}
	For all infinite cardinals $\mathfrak{a}$ and all non-zero natural numbers $n$, $\mathscr{B}_n(\mathfrak{a})\leqslant^{\ast}\mathscr{O}_{n}(\mathfrak{a})$.
\end{fact}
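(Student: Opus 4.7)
The plan is to exhibit a natural surjection from a suitable subset of $\mathscr{O}_n(A)$ onto $\mathscr{B}_n(A)$, where $A$ is a fixed set of cardinality $\mathfrak{a}$. The idea is that every partition with exactly $n$ non-singleton blocks is completely determined by the unordered collection of those blocks (the rest of $A$ is filled in by singletons), so any ordering of those blocks gives a preimage.

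Concretely, first I would single out
\[
S=\{(B_1,\dots,B_n)\in\mathscr{O}_n(A)\mid |B_i|\geqslant 2\text{ for every }i\leqslant n\},
\]
which is a subset of $\mathscr{O}_n(A)$. Then I would define $F\colon S\to\mathscr{B}_n(A)$ by
\[
F(B_1,\dots,B_n)=\{B_1,\dots,B_n\}\cup\bigl\{\{a\}\mid a\in A\setminus(B_1\cup\cdots\cup B_n)\bigr\}.
\]
The pairwise disjointness of the $B_i$ (from membership in $\mathscr{O}_n(A)$) together with $|B_i|\geqslant 2$ guarantees that $F(B_1,\dots,B_n)$ really is a finitary partition of $A$ whose set of non-singleton blocks is exactly $\{B_1,\dots,B_n\}$, a set of size $n$; hence $F(B_1,\dots,B_n)\in\mathscr{B}_n(A)$.

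For surjectivity, given any $P\in\mathscr{B}_n(A)$, its set $\mathrm{ns}(P)$ of non-singleton blocks is a finite set of size exactly $n$, so it admits at least one enumeration $(C_1,\dots,C_n)$. Such an enumeration lies in $S$ (the $C_i$ are pairwise disjoint finite subsets of $A$, each of size at least $2$), and by construction $F(C_1,\dots,C_n)=P$. Therefore $F$ is a surjection from the subset $S$ of $\mathscr{O}_n(A)$ onto $\mathscr{B}_n(A)$, which is exactly the statement $\mathscr{B}_n(\mathfrak{a})\leqslant^\ast\mathscr{O}_n(\mathfrak{a})$.

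There is no real obstacle here; the only point that deserves a sentence of care is why we are allowed to pick an enumeration of $\mathrm{ns}(P)$ in $\mathsf{ZF}$ without any choice assumption. This is fine because $\mathrm{ns}(P)$ is a finite set, and any finite set admits at least one (in fact, finitely many) enumeration(s), so surjectivity of $F$ needs no form of the axiom of choice.
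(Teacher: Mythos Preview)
Your proof is correct and follows essentially the same idea as the paper: form a partition from an $n$-tuple of pairwise disjoint finite sets by adjoining singletons for the remaining points. The only cosmetic difference is that the paper defines the map on all of $\mathscr{O}_n(A)$ via $\bar{p}\mapsto(\ran(\bar{p})\setminus\{\varnothing\})\cup[A\setminus\bigcup\ran(\bar{p})]^1$ (allowing empty components), whereas you restrict at the outset to the subset $S$ where every component has size at least $2$; your formulation is in fact slightly cleaner, since it makes the codomain $\mathscr{B}_n(A)$ immediate.
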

\begin{proof}
	For any infinite set $A$, the function that maps each $\bar{p}\in\mathscr{O}_{n}(A)$ to $(\ran(\bar{p})\setminus\{\varnothing\})\cup[A\setminus\bigcup\ran(\bar{p})]^1$ is a surjection from $\mathscr{O}_{n}(A)$ onto $\mathscr{B}_n(A)$.
\end{proof}

\begin{fact}\label{finfin}
	For all infinite cardinals $\fra$, $\mathscr{B}_{\fin}(\fra)\leqslant \fin(\fin(\fra))$.
\end{fact}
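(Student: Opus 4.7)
The plan is to exhibit an explicit injection from $\mathscr{B}_{\fin}(A)$ into $\fin(\fin(A))$ for any infinite set $A$ of cardinality $\fra$. The natural candidate is the map
\[
\Phi\colon P\longmapsto \mathrm{ns}(P),
\]
sending each finitary partition $P$ of $A$ with only finitely many non-singleton blocks to its (finite) set of non-singleton blocks.

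First I would verify that $\Phi$ actually lands in $\fin(\fin(A))$: since $P$ has finitely many non-singleton blocks by hypothesis, $\mathrm{ns}(P)$ is a finite set, and since $P$ is finitary, every block in $\mathrm{ns}(P)$ is a finite subset of $A$, so $\mathrm{ns}(P)\in\fin(\fin(A))$. This step is entirely routine and requires no choice.

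The key point is injectivity. Suppose $\Phi(P)=\Phi(Q)$, i.e., $\mathrm{ns}(P)=\mathrm{ns}(Q)$. Taking unions gives $\bigcup\mathrm{ns}(P)=\bigcup\mathrm{ns}(Q)$, so the complements in $A$ agree as well; but the singleton blocks of a partition are precisely the singletons $\{a\}$ with $a\in A\setminus\bigcup\mathrm{ns}(P)$. Hence $P$ and $Q$ have the same non-singleton blocks and the same singleton blocks, so $P=Q$.

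There is no real obstacle here; the proposition is essentially a bookkeeping observation and the construction is uniform, so it works in $\mathsf{ZF}$ without any appeal to choice. I would present the argument in one short paragraph, defining $\Phi$, noting that it is well-defined into $\fin(\fin(A))$, and verifying injectivity via the reconstruction of the singleton part from the complement of $\bigcup\mathrm{ns}(P)$.
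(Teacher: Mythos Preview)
Your proposal is correct and coincides exactly with the paper's proof: the paper also defines the injection $P\mapsto\mathrm{ns}(P)$ from $\mathscr{B}_{\fin}(A)$ into $\fin(\fin(A))$. Your additional verification of well-definedness and injectivity simply spells out what the paper leaves implicit.
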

\begin{proof}
	For any infinite set $A$, the function that maps each $P\in\mathscr{B}_{\fin}(A)$ to $\mathrm{ns}(P)$ is an injection from $\mathscr{B}_{\fin}(A)$ into $\fin(\fin(A))$. 
\end{proof}

\begin{lemma}\label{shen01}
	For all infinite cardinals $\fra$,
	\[
	\seq(\fra)=^{\ast}\fin(\fin(\fra))=^{\ast}\mathrm{seq}(\fra).
	\]
\end{lemma}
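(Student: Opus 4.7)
The plan is to prove the chain of $=^{\ast}$-equivalences by exhibiting explicit mutual surjections in every direction.

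Two of the four needed directions come quickly. First, $\seq(\fra)\leqslant\fin(\fin(\fra))$ is witnessed by the chain injection $\iota\colon(a_0,\ldots,a_{n-1})\mapsto\{\{a_0,\ldots,a_i\}:i<n\}$, which is injective since the resulting chain has links of distinct sizes $1,2,\ldots,n$ and each $a_i$ is the unique element newly introduced between consecutive links. Second, $\fin(\fin(\fra))\leqslant^{\ast}\mathrm{seq}(\fra)$ is witnessed by a ``first-element separator'' surjection $\psi\colon\mathrm{seq}(A)\to\fin(\fin(A))$: for $s=(x_0,\ldots,x_{n-1})$ with $n\geq 1$, let $0=i_0<i_1<\cdots<i_r$ be the positions at which $x_0$ occurs in $s$, and set $\psi(s)=\{\{x_j:i_l<j<i_{l+1}\}:l\leq r\}$ with $i_{r+1}=n$. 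Every $F\in\fin(\fin(A))$ lies in the image, since---using that $A$ is infinite and $\bigcup F$ is finite---one can pick $\sigma_0\in A\setminus\bigcup F$ and interleave $\sigma_0$ with arbitrary enumerations of the blocks of $F$.

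For the two ``reverse'' directions, I would use ``chain plus fresh markers'' encodings. For $\mathrm{seq}(\fra)\leqslant^{\ast}\fin(\fin(\fra))$: given a sequence $s$ with first-occurrence subsequence $\sigma^{\ast}=(b_0,\ldots,b_{m-1})$ and pattern $\pi\colon n\to m$, encode $s$ as $F_s=\iota(\sigma^{\ast})\cup\{\{y_0,\ldots,y_j\}:j<n\}\cup\{\{b_{\pi(i)},y_i\}:i<n\}$, where $y_0,\ldots,y_{n-1}\in A\setminus\ran(\sigma^{\ast})$ are fresh markers (available because $A$ is infinite). The decoding of an arbitrary $F\in\fin(\fin(A))$ proceeds by cases: if $F$ is totally ordered by inclusion, output the injective sequence extracted from the chain; otherwise, identify the ``pair'' doubletons of $F$ as the doubletons contained in no other member of $F$, and identify the two chains in the complement (distinguished by length: the shorter is the b-chain, the longer the y-chain) to recover $\sigma^{\ast}$, the markers, and the pattern $\pi$. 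For $\fin(\fin(\fra))\leqslant^{\ast}\seq(\fra)$: given $F=\{F_1,\ldots,F_m\}$, use $m$ distinct separators $u_0,\ldots,u_{m-1}\in A\setminus\bigcup F$ and form the injective sequence $(u_0,e_1,u_1,e_2,\ldots,u_{m-1},e_m)$, with each $e_i$ enumerating $F_i$.

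The main obstacle is specifying these reverse decodings as honest functions, canonically identifying the structural roles of each element without invoking the axiom of choice. The key observations making this possible are the structural distinctions imposed by the encodings---the b-chain being strictly shorter than the y-chain whenever $s$ has repetitions; pair doubletons being exactly those contained in no other member of $F$; and separators being identifiable from size or position structure---combined with the infinitude of $A$, which supplies the fresh markers and distinct separators needed at every step.
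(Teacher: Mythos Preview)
The paper does not prove this lemma itself but cites \cite[Proposition~2.7]{Shen2021}. Your two easy directions are correct. The two reverse directions, however, both fail as written. For $\fin(\fin(\fra))\leqslant^{\ast}\seq(\fra)$ the sequence $(u_0,e_1,u_1,\ldots,u_{m-1},e_m)$ is \emph{not injective} whenever two members of $F$ share an element---and members of $\fin(\fin(A))$ need not be pairwise disjoint---so such $F$ have no preimage under any decoding of this shape; even for disjoint $F$ your decoding cannot locate the separators $u_i$, since nothing in an injective sequence of atoms distinguishes a separator from a block element. For $\mathrm{seq}(\fra)\leqslant^{\ast}\fin(\fin(\fra))$ the rule ``pair doubletons are the doubletons contained in no other member of $F$'' misfires when $m=2$ (the top $\{b_0,b_1\}$ of the $b$-chain is then a maximal doubleton and gets wrongly classified as a pair) and likewise when $n=2$ (same problem for $\{y_0,y_1\}$), so your decoding does not recover $s$ from $F_s$ in those cases.

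A cleaner route that avoids both problems is to prove $\mathrm{seq}(\fra)\leqslant^{\ast}\seq(\fra)$ directly and then compose with your easy directions. The map sending $s$ to its first-occurrence subsequence $\sigma^{\ast}$ together with a natural-number code for its length and pattern is an injection $\mathrm{seq}(A)\to\seq(A)\times\omega$; and the map sending $t\in A^{\underline{k}}$ to $(t{\upharpoonright}i,j)$, where $k$ decodes as a pair $\langle i,j\rangle$ under any pairing function with $\langle i,j\rangle\geqslant i$, is a surjection of $\seq(A)$ onto $\seq(A)\times\omega$ (since $A$ is infinite, any $s\in A^{\underline{i}}$ extends to some $t\in A^{\underline{\langle i,j\rangle}}$). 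Chaining gives $\mathrm{seq}(\fra)\leqslant^{\ast}\seq(\fra)$, and then both missing directions follow from your directions~1 and~2 by transitivity of $\leqslant^{\ast}$.
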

\begin{proof}
	See \cite[Proposition~2.7]{Shen2021}
\end{proof}

\begin{lemma}\label{fin}
	For all infinite cardinals $\mathfrak{a}$ and all non-zero natural numbers $n$, $\fin(\mathfrak{a})^n=\mathscr{O}_{2^n-1}(\mathfrak{a})$.
\end{lemma}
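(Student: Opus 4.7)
The plan is to construct an explicit bijection between $\fin(A)^n$ and $\mathscr{O}_{2^n-1}(A)$, where $A$ is a set of cardinality $\fra$. The key observation is that $2^n-1$ is exactly the number of non-empty subsets of $\{1,\dots,n\}$, and the desired bijection is given by passing to the Venn-diagram atoms: from an $n$-tuple $(F_1,\dots,F_n)$ of finite subsets I read off, for each non-empty $S\subseteq\{1,\dots,n\}$, the ``region'' consisting of those elements lying in exactly the $F_i$ with $i\in S$.

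Concretely, I first fix once and for all an enumeration $S_1,\dots,S_{2^n-1}$ of $\mathcal{P}(\{1,\dots,n\})\setminus\{\varnothing\}$; since this is a finite set, no appeal to choice is involved. Then I define $\Phi\colon\fin(A)^n\to\mathscr{O}_{2^n-1}(A)$ by $\Phi(F_1,\dots,F_n)=(G_1,\dots,G_{2^n-1})$, where
\[
G_j=\{a\in A:\{i\leqslant n:a\in F_i\}=S_j\}.
\]
Each $G_j$ is contained in every $F_i$ with $i\in S_j$ and is therefore finite, while the $G_j$ are pairwise disjoint since each $a\in\bigcup_iF_i$ sits in the unique region determined by $\{i:a\in F_i\}$. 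In the reverse direction I set $\Psi(G_1,\dots,G_{2^n-1})=(F_1,\dots,F_n)$ with $F_i=\bigcup_{j\colon i\in S_j}G_j$; each $F_i$ is then a finite union of finite sets and so lies in $\fin(A)$.

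What remains is to check that $\Psi\circ\Phi=\mathrm{id}$ and $\Phi\circ\Psi=\mathrm{id}$, and this is a short element-chase: in both directions the pairwise disjointness of the $G_j$'s ensures that, for $a$ belonging to a reconstructed $F_i$, the condition $i\in S_j$ matches the unique $S_j$ with $a\in G_j$, which recovers the original data. I do not expect a genuine obstacle here; the identity $\fin(\fra)^n=\mathscr{O}_{2^n-1}(\fra)$ is a purely combinatorial ``atoms of the generated Boolean subalgebra'' correspondence, and the only choice-sensitive ingredient is the initial enumeration of the non-empty subsets of $\{1,\dots,n\}$, which is finite and therefore innocuous in $\mathsf{ZF}$.
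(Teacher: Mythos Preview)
Your proposal is correct and is essentially the same as the paper's own proof: the paper also fixes a bijection $h$ between $\{1,\dots,2^n-1\}$ and $\mathscr{P}(n)\setminus\{\varnothing\}$, sends $s\in\fin(A)^n$ to the tuple of Venn regions $p_i=\bigcap_{k\in h(i)}s(k)\setminus\bigcup_{k\notin h(i)}s(k)$, and defines the inverse by $g(\bar q)(k)=\bigcup\{q_{h^{-1}(a)}\mid k\in a\subseteq n\}$, which is exactly your $\Phi$ and $\Psi$ in different notation. The paper likewise leaves the verification that these are mutual inverses as a straightforward check.
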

\begin{proof}
	Let $A$ be an arbitrary infinite set and let $n$ be a non-zero natural number. Let $h$ be a bijection between $\{1,\dots,2^n-1\}$ and $\mathscr{P}(n)\setminus\{\varnothing\}$. 
	We define two functions $f:\fin(A)^n\to\mathscr{O}_{2^n-1}(A)$ and $g:\mathscr{O}_{2^n-1}(A)\to\fin(A)^n$ as follows. 
	For all $s\in\fin(A)^n$, $f(s)=\langle p_1,\dots,p_{2^n-1}\rangle$, where
	\[
	p_i=\bigcap_{k\in h(i)}s(k)\setminus\bigcup_{k\in n\setminus h(i)}s(k).
	\]  
	For all $\bar{q}=\langle q_1,\dots,q_{2^n-1}\rangle\in\mathscr{O}_{2^n-1}(A)$, $g(\bar{q})$ is the function on $n$ given by
	\[
	g(\bar{q})(k)=
	\bigcup\{q_{h^{-1}(a)}\mid k\in a\subseteq n\}.
	\]
	It is straightforward to check that $g(f(s))=s$ and $f(g(\bar{q}))=\bar{q}$, so both $f$ and $g$ are bijections.
\end{proof}

\begin{lemma}\label{bfin}
	For all infinite cardinals $\mathfrak{a}$ and all non-zero natural numbers $n$, $\mathscr{B}_{2^n-1}(\mathfrak{a})\leqslant^{\ast}[\fin(\mathfrak{a})]^n$.
\end{lemma}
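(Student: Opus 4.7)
The plan is to extract the ``atoms'' construction underlying Lemma~\ref{fin} and show that it descends to an unordered setting. Fix a bijection $h:\{1,\dots,2^n-1\}\to\mathscr{P}(n)\setminus\{\varnothing\}$ as in the proof of Lemma~\ref{fin}. For any $S\in[\fin(A)]^n$ with an arbitrary listing $\{s_1,\dots,s_n\}$, consider the sets
\[
p_i=\bigcap_{k\in h(i)}s_k\setminus\bigcup_{k\in n\setminus h(i)}s_k,\qquad 1\leqslant i\leqslant 2^n-1.
\]
These are precisely the atoms of the finite Boolean algebra generated by $S$, so the collection $\alpha(S)=\{p_i\mid 1\leqslant i\leqslant 2^n-1\}$ does not depend on the choice of listing and consists of pairwise disjoint finite subsets of $A$.

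Next, I would let $D\subseteq[\fin(A)]^n$ be the subset of those $S$ for which $\alpha(S)$ has exactly $2^n-1$ elements, each of cardinality at least $2$. Define $F:D\to\mathscr{B}_{2^n-1}(A)$ by
\[
F(S)=\alpha(S)\cup[A\setminus\textstyle\bigcup S]^1,
\]
which is a finitary partition of $A$ with exactly $2^n-1$ non-singleton blocks, so $F$ is well-defined.

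The main step is to verify that $F$ is surjective, which gives the desired surjection from the subset $D$ of $[\fin(A)]^n$ onto $\mathscr{B}_{2^n-1}(A)$. Given $Q\in\mathscr{B}_{2^n-1}(A)$, I would enumerate $\mathrm{ns}(Q)$ as $\langle q_1,\dots,q_{2^n-1}\rangle$ and set $s_k=\bigcup\{q_i\mid k\in h(i)\}$ for $k\in n$. By Lemma~\ref{fin}, the tuple $\langle s_1,\dots,s_n\rangle$ is exactly $g(\langle q_1,\dots,q_{2^n-1}\rangle)$, whence $f(\langle s_1,\dots,s_n\rangle)=\langle q_1,\dots,q_{2^n-1}\rangle$; that is, the atoms of the Boolean algebra generated by $\{s_1,\dots,s_n\}$ are exactly the blocks $q_1,\dots,q_{2^n-1}$. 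Consequently $\alpha(\{s_1,\dots,s_n\})=\mathrm{ns}(Q)$ and $F(\{s_1,\dots,s_n\})=Q$, provided $\{s_1,\dots,s_n\}$ is genuinely an $n$-element set.

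That distinctness verification is the one nontrivial step and the main obstacle I anticipate. For $k\neq k'$, because $h$ is a bijection onto the non-empty subsets of $n$, there is some $i$ with $k\in h(i)$ but $k'\notin h(i)$; then $q_i\subseteq s_k$ while $q_i\cap s_{k'}=\varnothing$ by pairwise disjointness of the blocks of $Q$, and $q_i\neq\varnothing$ forces $s_k\neq s_{k'}$. This confirms that the unordered data in $[\fin(A)]^n$ retains enough structure to reconstruct the full $(2^n-1)$-tuple of non-singleton blocks, completing the argument.
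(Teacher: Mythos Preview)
Your proposal is correct and follows essentially the same approach as the paper: both define a partial map on $[\fin(A)]^n$ by sending $S$ to the partition whose non-singleton blocks are the $2^n-1$ Boolean atoms of $S$ (the paper phrases this via the equivalence relation $\sim_s$, which yields exactly $A/{\sim_s}=\alpha(S)\cup\{A\setminus\bigcup S\}$). Your version is in fact more explicit than the paper's, which dismisses surjectivity as ``clearly as required''; your verification that the preimage $\{s_1,\dots,s_n\}$ genuinely has $n$ elements fills in the only nontrivial detail the paper omits.
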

\begin{proof}
	Let $A$ be an arbitrary infinite set and let $n$ be a non-zero natural number. Let $h$ be a bijection between $\{1,\dots,2^n-1\}$ and $\mathscr{P}(n)\setminus\{\varnothing\}$. 
	For each $s\in[\fin(A)]^n$, let $\sim_s$ be the equivalence relation on $A$ defined by
	\[
	x\sim_s y \qquad \text{if and only if}\qquad \forall p\in s(x\in p\leftrightarrow y\in p). 
	\]
	We define a surjection $f$ from a subset of $[\fin(A)]^n$ onto $\mathscr{B}_{2^n-1}(A)$ as follows. 
	For all $s\in[\fin(A)]^n$,
	if the quotient set $A/{\sim_s}$ has no singleton blocks and $|A/{\sim_s}|=2^n$, then 
	\[
	f(s)=((A/{\sim_s})\cap\fin(A))\cup[\bigcup((A/{\sim_s})\setminus\fin(A))]^1;
	\]
	otherwise, $f(s)$ is undefined. 
	Such a function $f$ is clearly as required.
\end{proof}

\section{A generalization of L{\"a}uchli's lemma}
In this section, we prove a generalization of L{\"a}uchli's lemma, which states that for all infinite cardinals $\mathfrak{a}$ and all non-zero natural numbers $n$, 
\[
(2^{ \mathscr{O}_{n} (\fra)})^{\aleph_0}=2^{\mathscr{B}_{n} (\fra)}.
\]
Note that when $n=1$ we obtain L{\"a}uchli's lemma. Also, using Lemmas~\ref{fin} and~\ref{bfin}, we obtain \eqref{ss01} as follows:
\[
2^{\fin(\mathfrak{a})^n}=2^{ \mathscr{O}_{2^n-1} (\fra)}\leqslant2^{ \mathscr{B}_{2^n-1} (\fra)}\leqslant2^{[\fin(\mathfrak{a})]^n}\leqslant	2^{\fin(\mathfrak{a})^n}.
\]

Fix an arbitrary infinite set $A$ and a non-zero natural number $n$. For a finite sequence $\langle x_1,\dots,x_n\rangle$ of length $n$, we write $\bar{x}=\langle x_1,\dots,x_n\rangle$ for short. For finite sequences $\bar{x}=\langle x_1,\dots,x_n\rangle$ and $\bar{y}=\langle y_1,\dots,y_n\rangle$, we introduce the following abbreviations: $\bar{x}\sqsubseteq\bar{y}$ means that $x_i\subseteq y_i$ for all $i=1,\dots,n$; $\bar{x}\sqsubset\bar{y}$ means that $\bar{x}\sqsubseteq\bar{y}$ and $\bar{x}\not=\bar{y}$; $\bar{x}\sqcup\bar{y}$ is the finite sequence $\langle x_1\cup y_1,\dots,x_n\cup y_n\rangle$. For an operator $\delta$ and $k\in\omega$, we write $\delta^{(k)}(X)$ for $\delta(\delta(\cdots\delta(X)\cdots))$ ($k$~times), and if $k=0$ then $\delta^{(0)}(X)$ is $X$ itself.

\begin{definition}
	For all natural numbers $m_1,\dots,m_n$ and $l_1,\dots.l_n$ such that $m_i\leqslant l_i$ for all $i=1,\dots,n$, we present the following functions: 
	\begin{enumerate}
		\item  $\gamma_{n,\bar{m},\bar{l}}$ is the function on $\mathscr{P}(\mathscr{O}_{m_1,\dots,m_n}(A))$ defined by   
		\[
		\gamma_{n,\bar{m},\bar{l}} (X)=\{\bar{q}\in \mathscr{O}_{l_1,\dots,l_n}(A)\mid \bar{p}\sqsubseteq\bar{q} \text{ for some }\bar{p}\in X\}.
		\]
		\item $\alpha_{n,\bar{m},\bar{l}}$ is the function on $\mathscr{P}( \mathscr{O}_{m_1,\dots,m_n}(A))$ defined by   
		\[
		\alpha_{n,\bar{m},\bar{l}}(X)=\{{\bar{p}}\in \mathscr{O}_{m_1,\dots,m_n}(A)\mid
		\text{ for all }\bar{q}\in \mathscr{O}_{l_1,\dots,l_n}(A)
		\text{, if }\bar{p}\sqsubseteq\bar{q} \text{ then }\bar{q}\in \gamma_{n,\bar{m},\bar{l}} (X)\}.
		\] 
		\item $\delta_{n,\bar{m},\bar{l}}$ is the function on $\mathscr{P}( \mathscr{O}_{m_1,\dots,m_n}(A))$ defined by 
		\[
		\delta_{n,\bar{m},\bar{l}}(X)= \alpha_{n,\bar{m},\bar{l}} (X)\setminus X.
		\]
	\end{enumerate}
\end{definition}

\begin{fact}\label{00}
	For all natural numbers $m_1,\dots,m_n$ and $l_1,\dots.l_n$ such that $m_i\leqslant l_i$ for all $i=1,\dots,n$, we have: 
	\begin{enumerate}
		\item If $X\subseteq Y\subseteq \mathscr{O}_{m_1,\dots,m_n}(A)$, then $\gamma_{n,\bar{m},\bar{l}}(X)\subseteq\gamma_{n,\bar{m},\bar{l}}(Y)$.\label{01}
		\item If $X\subseteq \mathscr{O}_{m_1,\dots,m_n}(A)$, then $X\subseteq\alpha_{n,\bar{m},\bar{l}}(X)$.\label{02}
		\item If $X\subseteq Y\subseteq \mathscr{O}_{m_1,\dots,m_n}(A)$, then $\alpha_{n,\bar{m},\bar{l}}(X)\subseteq\alpha_{n,\bar{m},\bar{l}}(Y)$.\label{03}
		\item If $X\subseteq \mathscr{O}_{m_1,\dots,m_n}(A)$, then $\gamma_{n,\bar{m},\bar{l}}(\alpha_{n,\bar{m},\bar{l}}(X))=\gamma_{n,\bar{m},\bar{l}}(X)$.\label{05}
		\item If $X\subseteq \mathscr{O}_{m_1,\dots,m_n}(A)$, then $\alpha_{n,\bar{m},\bar{l}}(\alpha_{n,\bar{m},\bar{l}}(X))=\alpha_{n,\bar{m},\bar{l}}(X)$.\label{04}
		\item $\gamma_{n,\bar{m},\bar{l}}$ is injective on $\{X\subseteq \mathscr{O}_{m_1,\dots,m_n}(A)\mid \alpha_{n,\bar{m},\bar{l}}(X)=X\}$.\label{06}
		\item Let $l^{\prime}_1,\dots,l^{\prime}_n$ be natural numbers such that $l_i\leqslant l^{\prime}_i$ for all $i=1,\dots,n$. If $X\subseteq \mathscr{O}_{m_1,\dots,m_n}(A)$, then $\alpha_{n,\bar{m},\bar{l}}(X)\subseteq\alpha_{n,\bar{m},\bar{l'}}(X)$, and hence $\alpha_{n,\bar{m},\bar{l'}}(X)=X$ implies that $\alpha_{n,\bar{m},\bar{l}}(X)=X$.\label{07}
		\item If $X\subseteq \mathscr{O}_{m_1,\dots,m_n}(A)$ and $k\in\omega$, then 
		\[
		\delta_{n,\bar{m},\bar{l}}^{(k)}(X)=\alpha_{n,\bar{m},\bar{l}}(\delta_{n,\bar{m},\bar{l}}^{(k)}(X))\setminus \delta_{n,\bar{m},\bar{l}}^{(k+1)}(X).
		\]\label{08}
	\end{enumerate}
\end{fact}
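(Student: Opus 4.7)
The plan is to verify the eight clauses in order, each being a short unfolding of the definitions built on its predecessors; no nontrivial combinatorics should be required.

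Clauses (1)--(3) are immediate: a common witness $\bar{p} \in X \subseteq Y$ proves monotonicity of $\gamma$, while $\bar{p} \in X$ itself witnesses $\bar{p} \in \alpha_{n,\bar{m},\bar{l}}(X)$ (giving (2)), and (3) follows by combining the definition of $\alpha$ with (1). For (4), the inclusion $\gamma_{n,\bar{m},\bar{l}}(X) \subseteq \gamma_{n,\bar{m},\bar{l}}(\alpha_{n,\bar{m},\bar{l}}(X))$ is (1) plus (2); conversely, any $\bar{q} \in \gamma_{n,\bar{m},\bar{l}}(\alpha_{n,\bar{m},\bar{l}}(X))$ comes with a witness $\bar{p} \in \alpha_{n,\bar{m},\bar{l}}(X)$ satisfying $\bar{p} \sqsubseteq \bar{q}$, so applying the definition of $\alpha$ to this particular $\bar{q}$ forces $\bar{q} \in \gamma_{n,\bar{m},\bar{l}}(X)$. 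Clause (5) then reduces to a two-step chase: if $\bar{p} \in \alpha_{n,\bar{m},\bar{l}}(\alpha_{n,\bar{m},\bar{l}}(X))$ and $\bar{p} \sqsubseteq \bar{q}$, then $\bar{q} \in \gamma_{n,\bar{m},\bar{l}}(\alpha_{n,\bar{m},\bar{l}}(X)) = \gamma_{n,\bar{m},\bar{l}}(X)$ by (4), so $\bar{p} \in \alpha_{n,\bar{m},\bar{l}}(X)$; the reverse inclusion is (2).

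Clause (6) is short: assuming $\alpha(X) = X$, $\alpha(Y) = Y$, and $\gamma(X) = \gamma(Y)$ (subscripts suppressed), for $\bar{p} \in X = \alpha(X)$ every $\bar{q}$ with $\bar{p} \sqsubseteq \bar{q}$ lies in $\gamma(X) = \gamma(Y)$, so $\bar{p} \in \alpha(Y) = Y$, and symmetrically. Clause (7) carries the most content: given $\bar{p} \in \alpha_{n,\bar{m},\bar{l}}(X)$ and $\bar{q}' \in \mathscr{O}_{l_1',\dots,l_n'}(A)$ with $\bar{p} \sqsubseteq \bar{q}'$, the size constraints $m_i \leqslant l_i \leqslant l_i'$ together with $|q_i' \setminus p_i| = l_i' - m_i \geqslant l_i - m_i$ let us pick some $\bar{q} \in \mathscr{O}_{l_1,\dots,l_n}(A)$ interpolating $\bar{p} \sqsubseteq \bar{q} \sqsubseteq \bar{q}'$, with pairwise disjointness inherited from $\bar{q}'$; the defining property of $\alpha_{n,\bar{m},\bar{l}}$ produces $\bar{p}'' \in X$ with $\bar{p}'' \sqsubseteq \bar{q} \sqsubseteq \bar{q}'$, placing $\bar{q}'$ in $\gamma_{n,\bar{m},\bar{l'}}(X)$ and thus $\bar{p}$ in $\alpha_{n,\bar{m},\bar{l'}}(X)$. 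The trailing assertion of (7) is then immediate, since $X \subseteq \alpha_{n,\bar{m},\bar{l}}(X) \subseteq \alpha_{n,\bar{m},\bar{l'}}(X)$ collapses to equality as soon as the outermost term equals $X$.

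Finally, (8) is pure set algebra: writing $Y = \delta_{n,\bar{m},\bar{l}}^{(k)}(X)$, the definitions give $\delta_{n,\bar{m},\bar{l}}^{(k+1)}(X) = \alpha_{n,\bar{m},\bar{l}}(Y) \setminus Y$, so $\alpha_{n,\bar{m},\bar{l}}(Y) \setminus \delta_{n,\bar{m},\bar{l}}^{(k+1)}(X) = \alpha_{n,\bar{m},\bar{l}}(Y) \cap Y = Y$, using $Y \subseteq \alpha_{n,\bar{m},\bar{l}}(Y)$ from (2). The only clause I expect to demand real care is (7), where the size arithmetic for interpolating $\bar{q}$ between $\bar{p}$ and $\bar{q}'$ must be spelled out; beyond that, no obstacle is anticipated.
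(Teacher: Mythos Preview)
Your proposal is correct and is precisely the straightforward unfolding of the definitions that the paper has in mind; indeed, the paper omits the proof entirely with the remark that it ``is straightforward and is therefore omitted.'' Your write-up of clause~(7), where one interpolates a $\bar{q}\in\mathscr{O}_{l_1,\dots,l_n}(A)$ between $\bar{p}$ and $\bar{q}'$ using the size inequalities, is the only place requiring a moment's thought, and you handle it correctly.
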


The proof of the above is straightforward and is therefore omitted. We also need the following  version of Ramsey's theorem, whose proof can be found in \cite[Theorem~4.7]{Halbeisen}.

\begin{lemma}\label{prt}
	There exists a function $R_n$ on $\omega^n\times(\omega\setminus\{0\})\times\omega$ such that, for all  $j_1,\dots,j_n,c,r\in \omega$ with $c\neq0$ and all finite sets $S_1,\dots,S_n,Y_1,\dots, Y_c$, if $|S_i|\geqslant R_n(j_1,\dots,j_n,c,r)$ for each $i=1,\dots,n$ and 
	\[
	[S_1]^{j_1}\times\cdots\times[S_n]^{j_n}=Y_1\cup\dots\cup Y_c,
	\]
	then for each $i=1,\dots,n$ there exists a $T_i\in[S_i]^r$ such that 
	\[
	[T_1]^{j_1}\times\cdots\times[T_n]^{j_n}\subseteq Y_d
	\]
	for some $d=1,\dots,c$.
\end{lemma}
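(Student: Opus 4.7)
The plan is to prove this by induction on $n$, treating it as a product version of the finite Ramsey theorem. The base case $n=1$ is the classical statement: for each $j,c,r\in\omega$ with $c\neq 0$ there is a Ramsey number $R_1(j,c,r)$ such that every $c$-coloring of $[S]^{j}$ with $|S|\geqslant R_1(j,c,r)$ admits a monochromatic $T\in[S]^{r}$, which itself is proved by the standard Erd\H{o}s--Szekeres style induction on $j$ (with auxiliary induction on $c$ and $r$).

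For the inductive step, assume $R_{n-1}$ has been constructed. Given parameters $j_1,\dots,j_n,c,r$, I would first set $N=R_1(j_n,c,r)$, then let $c'=c^{\binom{N}{j_n}}$, and finally define
\[
R_n(j_1,\dots,j_n,c,r)=\max\bigl(N,\;R_{n-1}(j_1,\dots,j_{n-1},c',r)\bigr).
\]
Order matters here: the auxiliary parameter $c'$ counts the functions $[S'_n]^{j_n}\to\{1,\dots,c\}$ for a fixed $S'_n$ of size $N$, and we need this number to be finite and known before invoking $R_{n-1}$.

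Given $S_1,\dots,S_n$ with $|S_i|\geqslant R_n(j_1,\dots,j_n,c,r)$ and a partition $[S_1]^{j_1}\times\cdots\times[S_n]^{j_n}=Y_1\cup\cdots\cup Y_c$, first shrink $S_n$ to some $S'_n\in[S_n]^{N}$. Define a refined coloring of $[S_1]^{j_1}\times\cdots\times[S_{n-1}]^{j_{n-1}}$ by sending $(t_1,\dots,t_{n-1})$ to the function $\phi_{t_1,\dots,t_{n-1}}\colon[S'_n]^{j_n}\to\{1,\dots,c\}$ with $\phi_{t_1,\dots,t_{n-1}}(t_n)=d$ iff $(t_1,\dots,t_n)\in Y_d$. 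There are exactly $c'$ such functions, so applying $R_{n-1}$ to this refined partition yields $T_i\in[S_i]^{r}$ for $i<n$ on which the assignment $(t_1,\dots,t_{n-1})\mapsto\phi_{t_1,\dots,t_{n-1}}$ is constant, say with common value $\phi\colon[S'_n]^{j_n}\to\{1,\dots,c\}$. Now apply the base case to $\phi$ on $S'_n$, whose size is exactly $R_1(j_n,c,r)$, to obtain $T_n\in[S'_n]^{r}\subseteq[S_n]^{r}$ on which $\phi$ is constant. Then $(T_1,\dots,T_n)$ is monochromatic for the original partition, as required.

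The only real obstacle is the one flagged above: one must fix the size of the last coordinate \emph{before} forming the refined coloring, so that the number of auxiliary colors is a finite function of the input parameters; otherwise the inductive invocation of $R_{n-1}$ is not meaningful. Beyond this, the argument is entirely standard bookkeeping, which is why the paper simply defers to \cite[Theorem~4.7]{Halbeisen}. Note also that the whole construction is explicit and visibly carried out in $\mathsf{ZF}$, so no appeal to choice is hidden in the recursion.
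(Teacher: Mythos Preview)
Your argument is correct and is the standard product-Ramsey induction on the number of factors; the only small slip is that the lemma does not assume the $Y_d$ are pairwise disjoint, so your clause ``$\phi_{t_1,\dots,t_{n-1}}(t_n)=d$ iff $(t_1,\dots,t_n)\in Y_d$'' need not define a function---simply take the least such $d$ instead, which changes nothing else. The paper itself gives no proof and defers to \cite[Theorem~4.7]{Halbeisen}, where essentially the same inductive argument is carried out, so your approach coincides with the intended one.
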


The key step of our proof is the following lemma, which essentially uses Lemma~\ref{prt}.

\begin{lemma}\label{hyf07}
	For all natural numbers $m_1,\dots,m_n$ and $l_1,\dots,l_n$ such that $m_i\leqslant l_i$ for all $i=1,\dots,n$, if $X\subseteq \mathscr{O}_{m_1,\dots,m_n}(A)$, then 
	\[
	\delta_{n,\bar{m},\bar{l}}^{(m_1+\dots+m_n+1)}(X)=\varnothing.
	\]
\end{lemma}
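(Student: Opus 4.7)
The plan is to prove the lemma by induction on $M := m_1 + \cdots + m_n$. The base case $M = 0$ is immediate: $\mathscr{O}_{0,\dots,0}(A)$ is the singleton $\{\langle\varnothing,\dots,\varnothing\rangle\}$, which is trivially fixed by every $\alpha_{n,\bar 0,\bar l}$, so $\delta(X) = \varnothing$ for any $X$.

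For the inductive step, assume the lemma for all smaller sums and suppose for contradiction that $\bar p \in \delta^{(M+1)}_{n,\bar m,\bar l}(X)$. By Fact~\ref{00}\eqref{08} we have $\bar p \in \alpha(\delta^{(M)}(X))$ while $\bar p \notin \delta^{(M)}(X)$, so every extension $\bar q \sqsupseteq \bar p$ in $\mathscr{O}_{\bar l}(A)$ admits a witness $\bar p'(\bar q) \in \delta^{(M)}(X)$ with $\bar p'(\bar q) \sqsubseteq \bar q$. Fix pairwise disjoint $S_1,\dots,S_n \subseteq A \setminus \bigcup_i \bar p_i$ with sizes above the Ramsey threshold, and color each tuple $(r_1,\dots,r_n) \in [S_1]^{l_1 - m_1} \times \cdots \times [S_n]^{l_n - m_n}$ by the finite datum $(\bar p'(\bar q)_i \cap \bar p_i)_{i=1}^{n}$, where $\bar q := \langle \bar p_i \cup r_i\rangle$. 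Lemma~\ref{prt} produces $T_i \in [S_i]^{r^*}$ (for any prescribed target $r^*$) carrying a single monochromatic type $(a_1,\dots,a_n)$.

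If $a_i = \bar p_i$ for every $i$, cardinality forces $\bar p'(\bar q) = \bar p$, contradicting $\bar p \notin \delta^{(M)}(X)$. Otherwise, I would reduce to a smaller instance by setting $m^*_i := m_i - |a_i|$, $\bar p^*_i := \bar p_i \setminus a_i$, $l^*_i := l_i - |a_i|$, and $X^* := \{\langle \bar p'_i \setminus a_i\rangle_i : \bar p' \in \delta^{(M)}(X),\ \bar p'_i \cap \bar p_i = a_i\}$, viewed inside $\mathscr{O}_{\bar m^*}(A)$. Using the Ramsey monochromaticity together with the correspondence between extensions of $\bar p^*$ and type-$(a)$ extensions of $\bar p$, one then checks that $\bar p^* \in \delta^{(M+1)}_{n,\bar m^*,\bar l^*}(X^*)$. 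When $\sum_i |a_i| \geq 1$, we have $\sum_i m^*_i < M$, so the inductive hypothesis gives $\delta^{(M+1)}(X^*) = \varnothing$, a contradiction.

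The main obstacle is the degenerate case $(a_1,\dots,a_n) = (\varnothing,\dots,\varnothing)$, where the reduction fails to shrink $M$. Here each witness $\bar p'(\bar q)$ is entirely disjoint from $\bar p$, with $\bar p'(\bar q)_i \subseteq r_i \subseteq T_i$, producing many elements of $\delta^{(M)}(X)$ clustered inside $T_1 \cup \cdots \cup T_n$. I expect to resolve this by iterating the Ramsey analysis on one such witness $\bar p' \in \delta^{(M)}(X)$, which already sits one level lower in the $\delta$-hierarchy; either a nontrivial monochromatic type eventually emerges, triggering the reduction in $M$, or the bootstrapping manufactures an infinite descending chain through the pairwise disjoint sets $\delta^{(M)}(X),\delta^{(M-1)}(X),\ldots$, which is impossible. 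Making this iteration precise — and verifying that each reduced $X^*$ lives in the correct ambient $\mathscr{O}$-space — is the combinatorial heart of the proof, and is precisely where the full multi-dimensional strength of Lemma~\ref{prt} is essential.
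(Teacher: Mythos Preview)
There are two genuine gaps, one you flag and one you do not. The unflagged one is the reduction step: you define $X^*$ from $\delta^{(M)}(X)$ and then assert $\bar p^{*} \in \delta^{(M+1)}_{n,\bar m^{*},\bar l^{*}}(X^{*})$, but a single Ramsey application only controls witnesses in $\delta^{(M)}(X)$ for extensions of $\bar p$ drawn from the homogeneous sets $T_i$. It says nothing about how $X^{*}$ behaves under further iterates of $\delta_{\bar m^{*},\bar l^{*}}$, and there is no commutation making the projection $\bar p' \mapsto \langle p'_i \setminus a_i\rangle_i$ intertwine $\delta_{\bar m,\bar l}$ with $\delta_{\bar m^{*},\bar l^{*}}$: when you test membership in $\alpha_{\bar m^{*},\bar l^{*}}(X^{*})$ against an arbitrary $\bar q^{*}$, the witness supplied by $\alpha_{\bar m,\bar l}$ upstairs need not have intersection type $a$ with $\bar p$, so its projection need not lie in $X^{*}$. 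Worse, the type $(a_i)$ you extract at level $M$ has no reason to agree with the type one would obtain at any other level, so one Ramsey call cannot govern the whole tower $X,\delta(X),\dots,\delta^{(M)}(X)$.

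Your instinct for the degenerate case---drop one level in the $\delta$-hierarchy and iterate---is in fact the correct organizing principle for the \emph{entire} proof, and the paper builds everything around it rather than around induction on $M$. It introduces the predicate $\psi(Y,\bar p)$ asserting that for \emph{every} $r$ there exist pairwise disjoint $q_1,\dots,q_n$ of size $r$, disjoint from $\bigcup_i p_i$, with $\bar p \sqcup \bar s \in Y$ for every $\bar s \in \prod_i [q_i]^{m_i-|p_i|}$. The key claim is a uniform one-step descent: $\psi(\delta(Y),\bar p)$ implies $\psi(Y,\bar u)$ for some $\bar u \sqsubset \bar p$. The universal quantifier over $r$ is precisely what lets one rerun Ramsey at each level with whatever threshold is needed. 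Starting from $\bar p \in \delta^{(M+1)}(X)$, which trivially gives $\psi(\delta^{(M+1)}(X),\bar p)$, and applying the descent $M+1$ times produces a strict chain $\bar p \sqsupset \bar u_1 \sqsupset \cdots \sqsupset \bar u_{M+1}$, impossible since $\bar p$ has only $M$ points in total. Your degenerate case $a=\langle\varnothing,\dots,\varnothing\rangle$ is absorbed (it just gives $\bar u=\langle\varnothing,\dots,\varnothing\rangle$, still a strict drop), and your easy case $a_i=p_i$ for all $i$ is excluded exactly as you say, since the same completion would then lie in both $Y$ and $\delta(Y)$.
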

\begin{proof}
	In this proof, we omit the subscripts in $\gamma_{n,\bar{m},\bar{l}},\alpha_{n,\bar{m},\bar{l}}$ and $\delta_{n,\bar{m},\bar{l}}$ for the sake of simplicity. 
	
	Let $\phi(X,\bar{p},\bar{q})$ abbreviate the following statement: $X\subseteq \mathscr{O}_{m_1,\dots,m_n}(A)$, $\bar{p},\bar{q}\in \mathscr{O}_{n}(A)$, $|p_i|\leqslant m_i$ for all $i=1,\dots,n$, $\bigcup\ran(\bar{p})\cap\bigcup\ran(\bar{q})=\varnothing$, and $\bar{p}\sqcup\bar{s}\in X$ for all $\bar{s}\in [q_1]^{m_1-|p_1|}\times\dots\times[q_n]^{m_n-|p_n|}$. 
	
	Let $\psi(X,\bar{p})$ abbreviate the following statement: for all $r\in\omega$ there exists a $\bar{q}\in \mathscr{O}_{r,\dots,r}(A)$ such that $\phi(X,\bar{p},\bar{q})$.
	
	We claim that for all $X\subseteq \mathscr{O}_{m_1,\dots,m_n}(A)$ and all $\bar{p}\in \mathscr{O}_{n}(A)$,
	\begin{equation}
		\psi(\delta(X),\bar{p})\text{ implies that }\psi(X,\bar{u})\text{ for some }\bar{u}\sqsubset\bar{p}.\label{C}
	\end{equation}
	If (\ref{C}) is proved, we can prove the lemma as follows. Assume to the contrary that $X\subseteq \mathscr{O}_{m_1,\dots,m_n}(A)$ and there is a $\bar{p}\in \delta^{(m_1+\dots+m_n+1)}(X)$. Clearly, $\psi(\delta^{(m_1+\dots+m_n+1)}(X),\bar{p})$. Repeatedly using (\ref{C}), we get a decreasing sequence 
	\[
	\bar{p}\sqsupset\bar{u}_1\sqsupset\dots\sqsupset\bar{u}_{m_1+\dots+m_n+1},
	\]
	contradicting $\bar{p}\in \mathscr{O}_{m_1,\dots,m_n}(A)\subseteq[A]^{m_1}\times\cdots\times[A]^{m_n}$.
	
	Now, let us prove (\ref{C}). Let $X\subseteq \mathscr{O}_{m_1,\dots,m_n}(A)$ and let $\bar{p}\in \mathscr{O}_{n}(A)$ be such that $\psi(\delta(X),\bar{p})$. It suffices to prove that 
	\[
	\forall r\geqslant l_1+\dots+l_n\,\exists\bar{u}\sqsubset\bar{p}\,\exists\bar{q}\in \mathscr{O}_{r,\dots,r}(A)\,\phi(X,\bar{u},\bar{q}),
	\]
	since then there will be a  $\bar{u}\sqsubset\bar{p}$ such that for infinitely many $r\in\omega$ there is a $\bar{q}\in \mathscr{O}_{r,\dots,r}(A)$ such that $\phi(X,\bar{u},\bar{q})$, and clearly we have $\psi(X,\bar{u})$.
	
	Let $r\geqslant l_1+\dots+l_n$. Let $R_n$ be the function as in Lemma~\ref{prt}. Define
	\[
	r'=\max\{R_n(j_1,\dots,j_n,2,r)\mid j_i\leqslant m_i\text{ for all }i=1,\dots,n\}
	\]
	and
	\[
	r''=R_n(l_1-|p_1|,\dots,l_n-|p_n|,2^{|p_1|+\dots+|p_n|},r').
	\]
	Since $\psi(\delta(X),\bar{p})$, there is an $\bar{S}\in \mathscr{O}_{r'',\dots,r''}(A)$ such that $\phi(\delta(X),\bar{p},\bar{S})$. For all $\bar{u}\sqsubseteq\bar{p}$, let 
	\[
	Y_{\bar{u}}=\{\bar{w}\in [S_1]^{l_1-|p_1|}\times\dots\times[S_n]^{l_n-|p_n|}\mid \bar{u}\sqcup\bar{v}\in X\text{ for some }\bar{v}\sqsubseteq\bar{w}\}.
	\]
	
	We claim that 
	\[
	[S_1]^{l_1-|p_1|}\times\dots\times[S_n]^{l_n-|p_n|}=\bigcup_{\bar{u}\sqsubseteq\bar{p}}Y_{\bar{u}}.
	\]
	Suppose $\bar{w}\in [S_1]^{l_1-|p_1|}\times\dots\times[S_n]^{l_n-|p_n|}$. Take an $\bar{s}\in [S_1]^{m_1-|p_1|}\times\dots\times[S_n]^{m_n-|p_n|}$ such that $\bar{s}\sqsubseteq\bar{w}$. Thus, by $\phi(\delta(X),\bar{p},\bar{S})$, we have  $\bar{p}\sqcup\bar{s}\in \delta(X)\subseteq\alpha(X)$. Since $\bar{p}\sqcup\bar{s}\sqsubseteq\bar{p}\sqcup\bar{w}\in \mathscr{O}_{l_1,\dots,l_n}(A)$, by the definition of  $\alpha$, we have $\bar{p}\sqcup\bar{w}\in\gamma(X)$, and thus $\bar{t}\sqsubseteq\bar{p}\sqcup\bar{w}$ for some $\bar{t}\in X$. Take $\bar{u}=\bar{t}\sqcap\bar{p}$ and $\bar{v}=\bar{t}\sqcap\bar{w}$. Then we have $\bar{u}\sqcup\bar{v}=\bar{t}\in X$, which means that $\bar{w}\in Y_{\bar{u}}$.
	
	Since $|\{\bar{u}\mid \bar{u}\sqsubseteq\bar{p}\}|=2^{|p_1|+\dots+|p_n|}$, by Lemma~\ref{prt}, for each $i=1,\dots,n$, there is a  $T_i\in[S_i]^{r'}$ such that    
	\[
	[T_1]^{l_1-|p_1|}\times\dots\times[T_n]^{l_n-|p_n|}\subseteq Y_{\bar{u}}
	\]
	for some $\bar{u}=\langle u_1,\dots,u_n\rangle\sqsubseteq\bar{p}$.
	Clearly, $\langle T_1,\dots,T_n\rangle\in \mathscr{O}_{r',\dots,r'}(A)$. Let
	\[
	Z=\{\bar{v}\in[T_1]^{m_1-|u_1|}\times\dots\times[T_n]^{m_n-|u_n|}\mid \bar{u}\sqcup\bar{v}\in X\}.
	\]
	Since $|T_i|=r'\geqslant  R_n(m_1-|u_1|,\dots,m_n-|u_n|,2,r)$ for all $i=1,\dots,n$, it follows from Lemma~\ref{prt} that there is some $\bar{q}=\langle q_1,\dots,q_n\rangle\in[T_1]^{r}\times\dots\times[T_n]^{r}\subseteq \mathscr{O}_{r,\dots,r}(A)$ such that either
	\begin{equation}\label{hs55}
		[q_1]^{m_1-|u_1|}\times\dots\times[q_n]^{m_n-|u_n|}\subseteq Z 
	\end{equation}
	or
	\begin{equation}\label{hs66}
		[q_1]^{m_1-|u_1|}\times\dots\times[q_n]^{m_n-|u_n|}\cap Z=\varnothing.
	\end{equation}
	Since $|q_i|=r \geqslant l_i\geqslant l_i-|p_i|$ for all $i=1,\dots,n$, there is a $\bar{w}\in [q_1]^{l_1-|p_1|}\times\dots\times[q_n]^{l_n-|p_n|}$. Since $q_i\subseteq T_i$ for all $i=1,\dots,n$ and $ [T_1]^{l_1-|p_1|}\times\dots\times[T_n]^{l_n-|p_n|}\subseteq Y_{\bar{u}}$, we know $\bar{w}\in Y_{\bar{u}}$. By the definition of $Y_{\bar{u}}$, there is a $\bar{v}\sqsubseteq\bar{w}$ such that $\bar{u}\sqcup\bar{v}\in X$, which excludes \eqref{hs66}. Thus, \eqref{hs55} holds, which means $\phi(X,\bar{u},\bar{q})$.
	
	The last step is to prove that $\bar{u}\neq\bar{p}$. Assume towards a contradiction that $\bar{u}=\bar{p}$. Since $\phi(\delta(X),\bar{p},\bar{S})$ and $\bar{q}\sqsubseteq\bar{S}$, we have $\phi(\delta(X),\bar{p},\bar{q})$. Since also $\phi(X,\bar{u},\bar{q})$, we have $\phi(X,\bar{p},\bar{q})$. Since $|q_i|=r\geqslant l_i\geqslant m_i\geqslant m_i-|p_i|$ for all $i=1,\dots,n$, there is an  $\bar{s}\in [q_1]^{m_1-|p_1|}\times\dots\times[q_n]^{m_n-|p_n|}$. For such an $\bar{s}$, we have both  $\bar{p}\sqcup\bar{s}\in X$ and $\bar{p}\sqcup\bar{s}\in \delta(X)$, contradicting the definition of $\delta$. 
\end{proof}

Now we are ready to prove our main theorem.

\begin{theorem}\label{hyf08}
	For all infinite cardinals $\mathfrak{a}$ and all non-zero natural numbers $n$,
	\[
	(2^{ \mathscr{O}_{n}  (\fra)})^{\aleph_0}=2^{\mathscr{B}_{n}(\mathfrak{a})}.
	\]
\end{theorem}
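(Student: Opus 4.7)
The inequality $2^{\mathscr{B}_n(\mathfrak{a})} \leqslant (2^{\mathscr{O}_n(\mathfrak{a})})^{\aleph_0}$ is immediate from Fact~\ref{bf}, since a surjection from $\mathscr{O}_n(A)$ onto $\mathscr{B}_n(A)$ gives $2^{\mathscr{B}_n(\mathfrak{a})} \leqslant 2^{\mathscr{O}_n(\mathfrak{a})} \leqslant (2^{\mathscr{O}_n(\mathfrak{a})})^{\aleph_0}$. For the reverse inequality, I plan to construct, for a fixed infinite set $A$ of cardinality $\mathfrak{a}$, an explicit injection $\Phi\colon\mathscr{P}(\mathscr{O}_n(A))^\omega \to \mathscr{P}(\mathscr{B}_n(A))$.

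The high-level idea is to exploit Lemma~\ref{hyf07} together with Fact~\ref{00}. For any $\bar{m} \in \omega^n$, any $\bar{l} \sqsupseteq \bar{m}$, and any $X_{\bar{m}} \subseteq \mathscr{O}_{\bar{m}}(A)$, the $\delta$-iteration terminates at step $m_1+\dots+m_n+1$, and Fact~\ref{00}(\ref{08}) lets us reconstruct $X_{\bar{m}}$ by downward induction from the finite chain of $\alpha$-closed sets $\alpha_{n,\bar{m},\bar{l}}(\delta_{n,\bar{m},\bar{l}}^{(i)}(X_{\bar{m}}))$. Each $\alpha$-closed set is in turn determined by its $\gamma_{n,\bar{m},\bar{l}}$-image, by Fact~\ref{00}(\ref{06}). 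Consequently, an arbitrary sequence $\vec{X} = (X_k)_{k \in \omega}$ is losslessly encoded by the family of $\gamma$-images indexed by triples $(k, \bar{m}, i)$, with each $\gamma$-image a subset of some $\mathscr{O}_{\bar{l}}(A)$. What remains is to package this family as a single subset of $\mathscr{B}_n(A)$ in a way that keeps different triples distinguishable from the resulting partitions.

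For the encoding, I would fix a bijection $\nu\colon \omega \times \omega^n \times \omega \to \omega$ and set
\[
N(k,\bar{m},i) = 2^{\max(m_1,\dots,m_n,2)}\bigl(2\nu(k,\bar{m},i) + 1\bigr), \qquad \bar{l}(k,\bar{m},i) = (N+1, N+2, \dots, N+n).
\]
Then $l_j \geqslant m_j$ and the $l_j$'s are strictly increasing, while $N$---and hence $(k,\bar{m},i)$---is recoverable from the multiset $\{l_1,\dots,l_n\}$: the $2$-adic valuation of $N$ yields $\max(\bar{m},2)$ and the odd cofactor yields $\nu$, and $\nu$ is injective. For $\bar{q} \in \mathscr{O}_{\bar{l}(k,\bar{m},i)}(A)$ let $P_{\bar{q}} \in \mathscr{B}_n(A)$ be the partition whose non-singleton blocks are exactly $q_1,\dots,q_n$. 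Define
\[
\Phi(\vec{X}) = \bigcup_{k,\bar{m},i}\bigl\{ P_{\bar{q}} \bigm| \bar{q} \in \gamma_{n,\bar{m},\bar{l}}\bigl(\alpha_{n,\bar{m},\bar{l}}\bigl(\delta^{(i)}_{n,\bar{m},\bar{l}}(X_k \cap \mathscr{O}_{\bar{m}}(A))\bigr)\bigr)\bigr\}.
\]
To recover $\vec{X}$ from $\Phi(\vec{X})$, for each $(k,\bar{m},i)$ I would collect those $P \in \Phi(\vec{X})$ whose non-singleton block sizes form the set $\{N+1,\dots,N+n\}$; sorting blocks by size reads off $\bar{q}$, hence the $\gamma$-image; Fact~\ref{00}(\ref{06}) then recovers the $\alpha$-closed set; and downward induction via Fact~\ref{00}(\ref{08}), starting from $\delta^{(m_1+\dots+m_n+1)} = \varnothing$, recovers $X_k \cap \mathscr{O}_{\bar{m}}(A)$, and taking unions over $\bar{m}$ gives $X_k$.

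The main obstacle is the encoding itself: a partition in $\mathscr{B}_n(A)$ carries essentially no syntactic data beyond its element content and the multiset of non-singleton block sizes, yet we must inject three independent labels $(k,\bar{m},i)$ into that limited data while keeping each $l_j$ large enough to host the "payload" $\bar{q}$ and keeping $\bar{l} \sqsupseteq \bar{m}$ so that Lemma~\ref{hyf07} and Fact~\ref{00} apply. The prime-power choice of $N$ resolves this cleanly. Once the encoding is in place, the verification that $\Phi$ is injective is essentially bookkeeping over Fact~\ref{00}, and the combinatorial heart of the argument sits inside Lemma~\ref{hyf07}, which itself rests on the Ramsey-theoretic Lemma~\ref{prt}.
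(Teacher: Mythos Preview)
Your overall architecture matches the paper's exactly: slice $X_k$ into the pieces $X_{k,\bar m}=X_k\cap\mathscr{O}_{\bar m}(A)$, record for each piece the finite tower of $\alpha$-closures of $\delta$-iterates, push each such set through $\gamma$ to get tuples with prescribed, strictly increasing block sizes, and turn those tuples into partitions. The easy direction and the decoding of the triple $(k,\bar m,i)$ from the multiset of block sizes are fine.

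There is, however, a real gap in the encoding. You make the size vector $\bar l=\bar l(k,\bar m,i)$ depend on the iteration index $i$, and then use this \emph{same} $\bar l$ in all three operators $\gamma_{n,\bar m,\bar l}$, $\alpha_{n,\bar m,\bar l}$, $\delta_{n,\bar m,\bar l}$. Consequently, what you are able to recover from $\Phi(\vec X)$ for a given $(k,\bar m)$ is the family
\[
W_i\;=\;\alpha_{n,\bar m,\bar l_i}\bigl(\delta_{n,\bar m,\bar l_i}^{(i)}(X_{k,\bar m})\bigr),\qquad \bar l_i=\bar l(k,\bar m,i),
\]
one set per $i$, each computed with its \emph{own} operator $\delta_{n,\bar m,\bar l_i}$. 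But Fact~\ref{00}(\ref{08}) and Lemma~\ref{hyf07} only let you run the downward induction
\[
\delta^{(i)}(X_{k,\bar m})\;=\;\alpha\bigl(\delta^{(i)}(X_{k,\bar m})\bigr)\setminus\delta^{(i+1)}(X_{k,\bar m})
\]
when $\alpha$ and $\delta$ are taken with a \emph{single fixed} $\bar l$ throughout the chain. With your $\bar l$ varying in $i$, there is no chain to descend: from $W_0=\alpha_{\bar l_0}(X_{k,\bar m})$ and $W_1=\alpha_{\bar l_1}(\alpha_{\bar l_1}(X_{k,\bar m})\setminus X_{k,\bar m})$ you cannot in general reconstruct $X_{k,\bar m}$, since you never record $\alpha_{\bar l_1}(X_{k,\bar m})$ or $\delta_{\bar l_0}(X_{k,\bar m})$.

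The paper resolves exactly this point by separating the two roles of $\bar l$. It fixes one size vector $g(j,\bar m)$ depending only on $(j,\bar m)$ and uses it for $\alpha$ and $\delta$, so that the iterates $\delta_{n,\bar m,g(j,\bar m)}^{(k)}(X_{j,\bar m})$ form a genuine chain to which Fact~\ref{00}(\ref{08}) and Lemma~\ref{hyf07} apply. It then uses a \emph{second} size vector $f(j,\bar m,k)$, varying with $k$ but componentwise $\sqsubseteq g(j,\bar m)$ for $k\le m_1+\dots+m_n$, only for $\gamma$; Fact~\ref{00}(\ref{07}) guarantees that a $g(j,\bar m)$-closed set is also $f(j,\bar m,k)$-closed, so Fact~\ref{00}(\ref{06}) still makes $\gamma_{n,\bar m,f(j,\bar m,k)}$ injective on the sets in question. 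Your scheme can be repaired in the same way: choose $\bar l^\ast=\bar l^\ast(k,\bar m)$ independent of $i$ for $\alpha,\delta$, and a second family $\bar l(k,\bar m,i)\sqsubseteq\bar l^\ast(k,\bar m)$ (for all relevant $i$) for $\gamma$ alone.
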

\begin{proof}
	Let $A$ be an infinite set with $|A|=\mathfrak{a}$. For all natural numbers $j,m_1,\dots,m_n,k$, let $f(j,\bar{m},k)$ be the ordered $n$-tuple
	\[ 
	\langle \mathsf{p}_{0}^i\mathsf{p}_1^j\mathsf{p}_2^{m_1}\cdots \mathsf{p}_{n+1}^{m_n}\mathsf{p}_{n+2}^k\rangle_{1\leqslant i\leqslant n},
	\] 
	in which $\mathsf{p}_l$ is the $l$-th prime number,
	and let $g(j,\bar{m})=f(j,\bar{m},m_1+\dots+m_n)$.
	
	For all $X\in\mathscr{P}(\mathscr{O}_{n}(A))^{\omega}$ and all natural numbers $j,m_1,\dots,m_n,k$, define 
	\begin{align*}
		X_{j,\bar{m}}=&X(j)\cap\mathscr{O}_{m_1,\dots,m_n}(A),\\
		Y_{j,\bar{m},k}=&\alpha_{n,\bar{m},g(j,\bar{m})}(\delta_{n,\bar{m},g(j,\bar{m})}^{(k)}(X_{j,\bar{m}})),\\
		Z_{j,\bar{m},k}=&\gamma_{n,\bar{m},f(j,\bar{m},k)}(Y_{j,\bar{m},k}).
	\end{align*}
	Let $H$ be the function on $\mathscr{P}(\mathscr{O}_{n}(A))^{\omega}$ defined by 
	\[
	H(X)=\{\ran(\bar{q})\cup[A\setminus\bigcup\ran(\bar{q})]^1\mid \exists j,m_1,\dots,m_n,k\in\omega(k\leqslant m_1+\dots+m_n\text{ and }\bar{q}\in Z_{j,\bar{m},k})\}.
	\] 
	We claim that $H$ is an injection from $\mathscr{P}(\mathscr{O}_{n}(A))^{\omega}$ into $\mathscr{P}(\mathscr{B}_{n}(A))$.
	
	Let $X\in \mathscr{P}(\mathscr{O}_{n}(A))^{\omega}$. For all $\bar{q}=\langle q_1,\dots,q_n\rangle\in Z_{j,\bar{m},k}$ and for all $i=1,\dots,n$, $|q_i|\geqslant 2$ since it is the $i$-th term of $f(l,\bar{m},k)$, which implies  $\ran(\bar{q})\cup[A\setminus\bigcup\ran(\bar{p})]^1\in \mathscr{B}_{n}(A)$. Hence $H(X)\subseteq \mathscr{B}_{n}(A)$. We show that $X$ is uniquely determined by $H(X)$ as follows.
	
	For all natural numbers $j,m_1,\dots,m_n,k$ such that $k\leqslant m_1+\dots+m_n$, $Z_{j,\bar{m},k}$ is uniquely determined by $H(X)$, since 
	\begin{equation}\label{hs11}
		Z_{j,\bar{m},k}=\{\bar{q}\in \mathscr{O}_{l_1,\dots,l_n}(A)\mid \ran(\bar{q})\cup[A\setminus\bigcup\ran(\bar{q})]^1\in H(X)\}, 
	\end{equation}
	where $l_i$ is the $i$-th term of $f(j,\bar{m},k)$ for all $i=1,\dots,n$. The point of \eqref{hs11} is that,  since $l_1<\dots<l_n$, the order of $\bar{q}$ can be recovered from the partition $\ran(\bar{q})\cup[A\setminus\bigcup\ran(\bar{q})]^1$ by comparing the cardinalities of its non-singleton blocks.
	
	For all natural numbers $j,m_1,\dots,m_n,k$ such that $k\leqslant m_1+\dots+m_n$, by (5), (6), (7) of Fact~\ref{00}, we know that $Y_{j,\bar{m},k}$ is the unique subset of $\mathscr{O}_{m_1,\dots,m_n}(A)$ such that $\alpha_{n,\bar{m},g(j,\bar{m})}(Y_{j,\bar{m},k})=Y_{j,\bar{m},k}$ and $\gamma_{n,\bar{m},f(j,\bar{m},k)}(Y_{j,\bar{m},k})=Z_{j,\bar{m},k}$, and since $Z_{j,\bar{m},k}$ is uniquely determined by $H(X)$, it follows that $Y_{j,\bar{m},k}$ is also uniquely determined by $H(X)$. 
	
	For all natural numbers $j,m_1,\dots,m_n,k$, by (8) of Fact~\ref{00} and Lemma~\ref{hyf07}, we know that 
	\[
	X_{j,\bar{m}}=Y_{j,\bar{m},0}\setminus(Y_{j,\bar{m},1}\setminus(\cdots(Y_{j,\bar{m},m_1+\dots+m_{n}-1}\setminus Y_{j,\bar{m},m_1+\dots+m_n})\cdots)).
	\]
	Thus, $X_{j,\bar{m}}$ is also uniquely determined by $H(X)$. 
	
	Finally, since 
	\[
	X=\langle\bigcup_{m_1,\dots,m_n\in\omega} X_{j,\bar{m}}\rangle_{j\in\omega},
	\]
	we know that $X$ is uniquely determined by $H(X)$.
	
	It follows that $(2^{ \mathscr{O}_{n} (\fra)})^{\aleph_0}\leqslant2^{\mathscr{B}_{n}(\fra)}$, which implies that $(2^{ \mathscr{O}_{n} (\fra)})^{\aleph_0}=2^{\mathscr{B}_{n}(\fra)}$ by Fact~\ref{bf} and the Cantor--Bernstein theorem.
\end{proof}

\begin{corollary}\label{omega}
	For all infinite cardinals $\mathfrak{a}$ and all non-zero natural numbers $n$,
	\[
	(2^{\mathscr{B}_{n}(\mathfrak{a})})^{\aleph_0}=2^{\mathscr{B}_{n}(\mathfrak{a})}.
	\]
\end{corollary}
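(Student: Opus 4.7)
The plan is to derive this corollary directly from Theorem~\ref{hyf08} together with Fact~\ref{bf}, without any further combinatorial work. The heavy lifting (the Ramsey-theoretic argument in Lemma~\ref{hyf07} and the coding in Theorem~\ref{hyf08}) has already produced the equality $(2^{\mathscr{O}_{n}(\mathfrak{a})})^{\aleph_0}=2^{\mathscr{B}_{n}(\mathfrak{a})}$, and the corollary should fall out by sandwiching $2^{\mathscr{B}_{n}(\mathfrak{a})}$ between two quantities that are known to be equal.

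First I would observe that Fact~\ref{bf} gives $\mathscr{B}_{n}(\mathfrak{a})\leqslant^{\ast}\mathscr{O}_{n}(\mathfrak{a})$, and since $\leqslant^{\ast}$ passes to powers of $2$, this yields $2^{\mathscr{B}_{n}(\mathfrak{a})}\leqslant 2^{\mathscr{O}_{n}(\mathfrak{a})}$. Raising both sides to the $\aleph_0$ (which is monotone in the base) gives
\[
(2^{\mathscr{B}_{n}(\mathfrak{a})})^{\aleph_0}\leqslant (2^{\mathscr{O}_{n}(\mathfrak{a})})^{\aleph_0}=2^{\mathscr{B}_{n}(\mathfrak{a})},
\]
where the last equality is exactly Theorem~\ref{hyf08}. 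The reverse inequality $2^{\mathscr{B}_{n}(\mathfrak{a})}\leqslant (2^{\mathscr{B}_{n}(\mathfrak{a})})^{\aleph_0}$ is trivial via the injection sending a subset to the constant $\omega$-sequence at that subset. The Cantor--Bernstein theorem then closes the argument.

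There is essentially no obstacle here: the only thing to keep in mind is the monotonicity of the operation $\mathfrak{b}\mapsto \mathfrak{b}^{\aleph_0}$ under $\leqslant$, which holds in $\mathsf{ZF}$ because an injection of bases lifts coordinatewise to an injection of $\omega$-sequences. The proof will therefore be only a few lines, consisting of the chain of inequalities above followed by an invocation of Cantor--Bernstein.
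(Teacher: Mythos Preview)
Your proposal is correct and is exactly the argument the paper has in mind: the paper's proof reads simply ``Immediately follows from Fact~\ref{bf} and Theorem~\ref{hyf08},'' which unpacks to precisely the sandwich $(2^{\mathscr{B}_{n}(\mathfrak{a})})^{\aleph_0}\leqslant (2^{\mathscr{O}_{n}(\mathfrak{a})})^{\aleph_0}=2^{\mathscr{B}_{n}(\mathfrak{a})}$ you wrote out.
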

\begin{proof}
	Immediately follows from Fact~\ref{bf} and Theorem~\ref{hyf08}. 
\end{proof}

\begin{corollary}\label{finb}
	For all infinite cardinals $\mathfrak{a}$ and all non-zero natural numbers $n$,
	\[
	2^{\fin(\fra)^n}=2^{\mathscr{B}_{2^n-1}(\fra)}.
	\]
\end{corollary}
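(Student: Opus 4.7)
The plan is simply to chain together the results already established in the paper and invoke the Cantor--Bernstein theorem; the corollary is essentially a bookkeeping consequence of Lemma~\ref{fin}, Theorem~\ref{hyf08}, and Lemma~\ref{bfin}, and indeed the chain of inequalities needed is the very one displayed at the beginning of Section~3.

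First I would apply Lemma~\ref{fin} (with $n$ in place of $n$) to rewrite the left-hand side:
\[
2^{\fin(\fra)^n}=2^{\mathscr{O}_{2^n-1}(\fra)}.
\]
Next, I would invoke Theorem~\ref{hyf08} with its $n$ replaced by $2^n-1$, together with the trivial inequality $2^{\mathscr{O}_{2^n-1}(\fra)}\leqslant(2^{\mathscr{O}_{2^n-1}(\fra)})^{\aleph_0}$, to obtain
\[
2^{\mathscr{O}_{2^n-1}(\fra)}\leqslant 2^{\mathscr{B}_{2^n-1}(\fra)}.
\]

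For the reverse direction, I would use Lemma~\ref{bfin} to get $\mathscr{B}_{2^n-1}(\fra)\leqslant^{\ast}[\fin(\fra)]^n$, whence $2^{\mathscr{B}_{2^n-1}(\fra)}\leqslant 2^{[\fin(\fra)]^n}$. It only remains to observe the routine fact $[\fin(\fra)]^n\leqslant^{\ast}\fin(\fra)^n$ (any $n$-element subset of $\fin(A)$ is the range of an injective $n$-tuple, and these injective tuples form a subset of $\fin(A)^n$), giving $2^{[\fin(\fra)]^n}\leqslant 2^{\fin(\fra)^n}$.

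Stringing the four steps together yields $2^{\fin(\fra)^n}\leqslant 2^{\mathscr{B}_{2^n-1}(\fra)}\leqslant 2^{\fin(\fra)^n}$, and Cantor--Bernstein then closes the loop to give the claimed equality. There is no genuine obstacle here: all the difficulty has already been absorbed into Theorem~\ref{hyf08}, and the present corollary is a one-line combinatorial consequence.
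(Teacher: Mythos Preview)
Your proof is correct and follows essentially the same approach as the paper; indeed, the chain you assemble is exactly the one displayed at the start of Section~3. The only cosmetic difference is that for the reverse inequality the paper's one-line proof of Corollary~\ref{finb} cites Fact~\ref{bf} (giving $\mathscr{B}_{2^n-1}(\fra)\leqslant^\ast\mathscr{O}_{2^n-1}(\fra)=\fin(\fra)^n$ directly) rather than detouring through Lemma~\ref{bfin} and $[\fin(\fra)]^n$, but both routes are immediate.
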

\begin{proof}
	Immediately follows from Fact~\ref{bf}, Lemma~\ref{fin} and Theorem~\ref{hyf08}. 
\end{proof}

\begin{corollary}\label{finfinfin}
	For all infinite cardinals $\fra$, 
	\[
	2^{\fin(\fin(\fra))}=	2^{\mathscr{B}_{\fin}(\fra)}
	\]
\end{corollary}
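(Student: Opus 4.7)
The plan is to apply the Cantor--Bernstein theorem after establishing opposite inequalities between $2^{\fin(\fin(\fra))}$ and $2^{\mathscr{B}_{\fin}(\fra)}$. One direction is immediate: Fact~\ref{finfin} gives $\mathscr{B}_{\fin}(\fra)\leqslant\fin(\fin(\fra))$, hence $2^{\mathscr{B}_{\fin}(\fra)}\leqslant2^{\fin(\fin(\fra))}$. All the substance lies in the reverse inequality, whose main ingredient will be Corollary~\ref{finb}.

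First I would rewrite the left-hand side using Lemma~\ref{shen01}: since $\mathrm{seq}(\fra)=^{\ast}\fin(\fin(\fra))$, we have $2^{\fin(\fin(\fra))}=2^{\mathrm{seq}(\fra)}$. As $\mathrm{seq}(A)$ is the disjoint union $\bigcup_{k\in\omega}A^{k}$, this factorises as $2^{\mathrm{seq}(\fra)}=\prod_{k\in\omega}2^{\fra^{k}}$. The singleton embedding $a\mapsto\{a\}$ of $A$ into $\fin(A)$ gives $\fra^{k}\leqslant\fin(\fra)^{k}$ componentwise, so $\prod_{k}2^{\fra^{k}}\leqslant\prod_{k}2^{\fin(\fra)^{k}}$. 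Applying Corollary~\ref{finb} term by term identifies each factor as $2^{\mathscr{B}_{2^{k}-1}(\fra)}$, yielding $\prod_{k\in\omega}2^{\fin(\fra)^{k}}=\prod_{k\in\omega}2^{\mathscr{B}_{2^{k}-1}(\fra)}=2^{\sum_{k\in\omega}\mathscr{B}_{2^{k}-1}(\fra)}$.

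To close, I would bound this exponent by $\mathscr{B}_{\fin}(\fra)$. Because a finitary partition has a unique number of non-singleton blocks and $k\mapsto 2^{k}-1$ is injective on $\omega$, the sets $\mathscr{B}_{2^{k}-1}(A)$ for $k\in\omega$ are pairwise disjoint subsets of $\mathscr{B}_{\fin}(A)=\bigcup_{m\in\omega}\mathscr{B}_{m}(A)$; their union inside $\mathscr{B}_{\fin}(A)$ gives $\sum_{k\in\omega}\mathscr{B}_{2^{k}-1}(\fra)\leqslant\mathscr{B}_{\fin}(\fra)$. Chaining the inequalities produces $2^{\fin(\fin(\fra))}\leqslant2^{\mathscr{B}_{\fin}(\fra)}$, and Cantor--Bernstein completes the proof.

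I do not anticipate any genuine obstacle here, since the substantive work is already packaged into Lemma~\ref{shen01} and Corollary~\ref{finb} and the remainder is a canonical cardinal-arithmetic reorganisation that needs no additional choice principles. The only delicate point is to work with $\mathrm{seq}(\fra)$ rather than $\seq(\fra)$ in the factorisation step, so that the summands $\fra^{k}$ genuinely admit the componentwise singleton injection into $\fin(\fra)^{k}$ needed to invoke Corollary~\ref{finb}.
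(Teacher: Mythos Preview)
Your route is essentially the paper's: both pass through $2^{\mathrm{seq}(\fra)}\leqslant 2^{\mathrm{seq}(\fin(\fra))}$ and then, level by level, use an injection $\mathscr{P}(\fin(A)^k)\hookrightarrow\mathscr{P}(\mathscr{B}_{2^k-1}(A))$ to land inside $\mathscr{P}(\mathscr{B}_{\fin}(A))$, with Fact~\ref{finfin} giving the other direction.

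There is, however, one point you gloss over. When you write ``applying Corollary~\ref{finb} term by term'' to conclude $\prod_{k}2^{\fin(\fra)^k}=\prod_{k}2^{\mathscr{B}_{2^k-1}(\fra)}$, the bare cardinal equalities $2^{\fin(\fra)^k}=2^{\mathscr{B}_{2^k-1}(\fra)}$ are not enough in $\mathsf{ZF}$: forming the product map requires a \emph{uniform} sequence $\langle\phi_k\rangle_{k\in\omega}$ of bijections, and selecting one witness for each $k$ from the mere existence statements would invoke countable choice. This is precisely why the paper does not quote Corollary~\ref{finb} but instead goes back to the \emph{proofs} of Theorem~\ref{hyf08} and Lemma~\ref{fin}, which yield explicitly definable maps $f_{2^k-1}\colon\mathscr{P}(\mathscr{O}_{2^k-1}(A))\to\mathscr{P}(\mathscr{B}_{2^k-1}(A))$ and $g_k\colon\fin(A)^k\to\mathscr{O}_{2^k-1}(A)$ uniformly in $k$, and then assembles them into a single injection $h\colon\mathscr{P}(\mathrm{seq}(\fin(A)))\to\mathscr{P}(\mathscr{B}_{\fin}(A))$ via $h(X)=\bigcup_{k}f_{2^k-1}(g_k[X\cap\fin(A)^k])$. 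Once you make this uniformity explicit, your argument and the paper's coincide. (A minor side remark: Corollary~\ref{finb} is stated only for non-zero $n$, so the $k=0$ factor must be handled separately, which is trivial since $\fra^0=1=\mathscr{B}_0(\fra)$.)
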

\begin{proof}
	Let $A$ be an arbitrary infinite set. For every $n\in\omega$, by the proof of Theorem~\ref{hyf08}, we can define an injection $f_n$ from $\mathscr{P}(\mathscr{O}_{n}(A))$ into $\mathscr{P}(\mathscr{B}_{n}(A))$, and by the proof of  Lemma~\ref{fin}, we can define a bijection $g_n$ between $\fin(A)^n$ and $\mathscr{O}_{2^n-1}(A)$.
	Let $h$ be the function on $\mathscr{P}(\mathrm{seq}(\fin(A)))$ defined by 
	\[
	h(X)=\bigcup_{n\in\omega}f_{2^n-1}(g_n[X\cap\fin(A)^n]).
	\]
	It is easy to see $h$ is an injection from $\mathscr{P}(\mathrm{seq}(\fin(A)))$ into $\mathscr{P}(\mathscr{B}_{\fin}(A))$. Hence, by Fact~\ref{finfin} and Lemma~\ref{shen01}, 
	\[
	2^{\fin(\fin(\fra))}=2^{\mathrm{seq}(\fra)}\leqslant 2^{\mathrm{seq}(\fin(\fra))}\leqslant 2^{\mathscr{B}_{\fin}(\fra)}\leqslant 2^{\fin(\fin(\fra))}.\qedhere
	\]
\end{proof}

In the next section, we will show that Corollaries~\ref{omega}, ~\ref{finb} and~\ref{finfinfin} are optional in the sense that the stronger statements $\mathscr{B}_{n}(\mathfrak{a})\cdot\aleph_0\leqslant^{\ast}\mathscr{B}_{n}(\mathfrak{a})$, $\fin(\fra)^n\leqslant^{\ast}\mathscr{B}_{2^n-1}(\fra)$ and $\fin(\fin(\fra))\leqslant^{\ast}	\mathscr{B}_{\fin}(\fra)$
cannot be proved in $\mathsf{ZF}$.
\section{Consistency results }
In this section, we prove several consistency results using the method of permutation models. We refer the reader to  \cite[Chapter~8]{Halbeisen} for an introduction to the theory of permutation models. Although permutation models are only models of $\mathsf{ZFA}$ (the Zermelo--Fraenkel set theory with atoms), the Jech--Sochor theorem yields the desired $\mathsf{ZF}$-models.  

Here we only consider the basic Fraenkel model $\mathcal{V}_{\mathrm{F}}$ (see~\cite[pp.~195--196]{Halbeisen}). The set $A$ of atoms of $\mathcal{V}_{\mathrm{F}}$ is denumerable, and $x\in\mathcal{V}_{\mathrm{F}}$ if and only if $x\subseteq\mathcal{V}_{\mathrm{F}}$ and $x$ has a finite support, that is, a set $E\in\fin(A)$ such that every permutation of $A$ fixing $E$ pointwise also fixes $x$.

\begin{lemma}\label{fb}
	Let $A$ be the set of atoms of $\mathcal{V}_{\mathrm{F}}$ and let $\mathfrak{a}=|A|$. In $\mathcal{V}_{\mathrm{F}}$, for all non-zero natural numbers $n$,
	\[
	2^{\mathscr{O}_n(\mathfrak{a})}<	2^{\mathscr{O}_{n+1}(\mathfrak{a})}.
	\]
\end{lemma}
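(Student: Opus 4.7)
The inequality $2^{\mathscr{O}_n(\mathfrak{a})}\leqslant 2^{\mathscr{O}_{n+1}(\mathfrak{a})}$ is an absolute $\mathsf{ZF}$-fact: the map $X\mapsto\{(p_1,\dots,p_n,\varnothing)\mid(p_1,\dots,p_n)\in X\}$ is an injection $\mathscr{P}(\mathscr{O}_n(A))\hookrightarrow\mathscr{P}(\mathscr{O}_{n+1}(A))$, since $\varnothing$ is a valid finite subset disjoint from every other finite set. So the real content of the lemma lies in verifying strict inequality inside $\mathcal{V}_{\mathrm{F}}$.

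For that, my plan is a proof by contradiction. Assume $F\colon\mathscr{P}(\mathscr{O}_{n+1}(A))\to\mathscr{P}(\mathscr{O}_n(A))$ is an injection in $\mathcal{V}_{\mathrm{F}}$, with some finite support $E\subseteq A$. The standard permutation-model machinery then pins down three tight constraints on $F$: (a) $F(\pi X)=\pi F(X)$ for every $\pi\in\mathrm{Sym}_E(A)$; (b) any support of $X$ together with $E$ is a support of $F(X)$; and (c) by injectivity, the $\mathrm{Sym}_E(A)$-stabilizer of $F(X)$ coincides with that of $X$, so in $\mathcal{V}_{\mathrm{F}}$ (where least supports exist) $X$ and $F(X)$ have the same least support containing $E$.

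Using these constraints, I would try to locate two distinct $X_0,X_1\in\mathscr{P}(\mathscr{O}_{n+1}(A))$ with $F(X_0)=F(X_1)$ by exploiting the $(n+1)$-versus-$n$ discrepancy. A first probe is to take singletons $X_{\bar a}=\{(\{a_1\},\dots,\{a_{n+1}\})\}$ for ordered tuples $\bar a$ of distinct atoms outside $E$. Each $F(X_{\bar a})$ is an $(E\cup\{a_1,\dots,a_{n+1}\})$-supported union of orbits on $\mathscr{O}_n(A)$, every orbit coded by a pair $((c_1,\dots,c_n),(k_1,\dots,k_n))$ with the $c_i\subseteq E\cup\{a_1,\dots,a_{n+1}\}$ pairwise disjoint and $k_i\in\omega$; by (c), the set of such codes defining $F(X_{\bar a})$ must be moved by every non-trivial permutation in $\mathrm{Sym}(\{a_1,\dots,a_{n+1}\})$. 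Since only $n$ coordinates of each orbit are available to carry all of the asymmetric information that distinguishes the $n+1$ atoms of $\bar a$, this places a rigid combinatorial constraint on the orbit-set coding $F(X_{\bar a})$.

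The main obstacle is to turn this observation into an actual collision, because a naive cardinality count fails — both the space of $E$-supported test sets and the space of possible codes have size $2^{\aleph_0}$. I therefore expect the argument to consider richer test inputs, e.g.\ finite unions of $X_{\bar a}$'s built from several disjoint blocks of $n+1$ atoms, and to track how the required stabilizers of their $F$-images interact across permutations of the underlying atoms. The delicate step, in the spirit of the Ramsey-theoretic bookkeeping already used in the proof of Lemma~\ref{hyf07}, is to show that the $(n+1)$-coordinate richness of $\mathscr{O}_{n+1}(A)$ generates more distinct stabilizer patterns than the $n$ coordinates of $\mathscr{O}_n(A)$ can faithfully encode, forcing two distinct test inputs to share the same image and contradicting injectivity of $F$.
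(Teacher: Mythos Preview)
Your setup is fine, but the proposal has a genuine gap: you yourself flag that you do not know how to produce an actual collision, and the suggested fallback---``richer test inputs'' plus ``Ramsey-theoretic bookkeeping in the spirit of Lemma~\ref{hyf07}''---is not a proof and, in fact, is not the mechanism the paper uses at all. No Ramsey argument is needed here; the collision is forced by a short parity trick that your plan never approaches.

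The paper's key idea is to work with $n+2$ atoms rather than $n+1$. Fix $B\in[A\setminus E]^{n+2}$ and some $s\in B^{\underline{n+1}}$, and set $\xi=\{\pi(s)\mid\pi\text{ an even permutation of }B\}$ and $\vartheta=\{\sigma(s)\mid\sigma\text{ odd}\}$; these are two \emph{distinct} elements of $\mathscr{P}(A^{\underline{n+1}})$ (hence, via the embedding $a\mapsto\{a\}$ on each coordinate, of $\mathscr{P}(\mathscr{O}_{n+1}(A))$). Now observe that any $\bar p\in\mathscr{O}_n(A)$ induces on $B$ an equivalence relation with at most $n+1$ classes, so by pigeonhole some transposition $\tau$ of $B$ fixes $\bar p$. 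Hence for any odd $\sigma$ one has $\sigma(\bar p)=(\sigma\tau)(\bar p)$ with $\sigma\tau$ even; this shows that every subset of $\mathscr{O}_n(A)$ invariant under the even permutations of $B$ is automatically invariant under the odd ones as well. In particular $f(\xi)$ is fixed by every permutation of $B$, so $f(\xi)=\sigma f(\xi)=f(\sigma\xi)=f(\vartheta)$, contradicting injectivity. The whole argument is a pigeonhole/parity observation on a single block of $n+2$ atoms; your proposed route through stabilizer patterns of many blocks and orbit-codes is much heavier machinery aimed at the wrong target, and as written does not close the gap.
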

\begin{proof}
	Let $n$ be a non-zero natural number. We claim that, in $\mathcal{V}_{\mathrm{F}}$,
	\[
	2^{\mathfrak{a}^{\underline{n+1}}}\nleqslant2^{\mathscr{O}_n(\mathfrak{a})}.
	\]
	Assume to the contrary that there exists an injection $f\in V_F$ from $\mathscr{P}(A^{\underline{n+1}})$ into $\mathscr{P}(\mathscr{O}_n(A))$. Let $E$ be a finite support of $f$. Take an arbitrary $B\in[A\setminus E]^{n+2}$ and an $s\in B^{\underline{n+1}}$.
	
	A permutation $\pi$ of $B$ is said to be even (odd) if $\pi$ can be written as a product of an even (odd) number of transpositions. We shall view permutations of $B$ as permutations of $A$ by understanding that they are extended to be the identity outside of $B$. Now, let 
	\[
	\xi=\{\pi(s)\mid \pi \text{ is an even permutation of }B \},
	\]
	and let 
	\[
	\vartheta=\{\sigma(s)\mid \sigma \text{ is an odd permutation of }B \}.
	\]
	Since a permutation of $B$ cannot be both even and odd, it follows that $\{\xi,\vartheta\}$ is a partition of $B^{\underline{n+1}}$. It is also clear that $\pi(\xi)=\xi$ for all even permutations $\pi$ of $B$ and $\sigma(\xi)=\vartheta$ for all odd permutations $\sigma$ fo $B$. 
	
	Now, let us consider $f(\xi)$. For each $\bar{p}=\langle p_1,\dots,p_n\rangle\in f(\xi)$, let $\sim_{\bar{p}}$ be the equivalence relation on $B$ induced by $\bar{p}$, that is, for all $a,b\in B$, 
	\[
	a\sim_{\bar{p}} b\qquad\text{ if and only if }\qquad \text{either }a,b\in p_i\text{ for some }i=1,\dots,n\text{ or }a,b\in B\setminus\bigcup^n_{i=1}p_i.
	\]
	For all even permutations $\pi$ of $B$, since $E$ is a finite support of $f$ and $E\cap B=\emptyset$, it follows that $\pi(f)=f$, and thus $\pi(f(\xi))=f(\pi(\xi))=f(\xi)$. Let $\sigma$ be an odd permutation of $B$. We claim that 
	\[
	\sigma(f(\xi))=f(\xi)
	\]
	In fact, for all $\bar{p}\in f(\xi)$, since $|B/{\sim_{\bar{p}}}|\leqslant n+1$ and $|B|=n+2$, there are $a,b\in B$ such that $a\not=b$ and $a\sim_{\bar{p}} b$, and thus the transposition $\tau$ that swaps $a$ and $b$ fixes $\bar{p}$, which implies that $\sigma(\bar{p})=(\sigma\circ\tau)(\bar{p})\in f(\xi)$ since $\sigma\circ\tau$ is even. Thus, $\sigma(f(\xi))=f(\xi)$. It follows that $f(\xi)=\sigma(f(\xi))=f(\sigma(\xi))=f(\vartheta)$, contracting that $f$ is an injection. Thus $2^{\mathfrak{a}^{\underline{n+1}}}\nleqslant2^{\mathscr{O}_n(\mathfrak{a})}$.
	
	It is straightforward to see that $\mathfrak{a}^{\underline{n+1}}\leqslant\mathscr{O}_{n+1}(\mathfrak{a})$, and hence $2^{\mathfrak{a}^{\underline{n+1}}}\leqslant 2^{\mathscr{O}_{n+1}(\mathfrak{a})}$, so $2^{\mathscr{O}_n(\mathfrak{a})}<	2^{\mathscr{O}_{n+1}(\mathfrak{a})}$.
\end{proof}

Now the next corollary immediately follows from Lemma~\ref{fb} and the Jech--Sochor theorem.

\begin{corollary}\label{per}
	It is consistent with $\mathsf{ZF}$ that there is an infinite cardinal $\mathfrak{a}$ such that 	for all non-zero natural numbers $n$,
	\[
	2^{\mathscr{O}_n(\mathfrak{a})}<2^{\mathscr{O}_{n+1}(\mathfrak{a})}.
	\]
\end{corollary}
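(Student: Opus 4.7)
The plan is simply to combine Lemma~\ref{fb} with the Jech--Sochor transfer theorem, which is exactly what the author's one-line proof signals. Lemma~\ref{fb} already establishes, inside the basic Fraenkel model $\mathcal{V}_F$, that if $\fra=|A|$ (the cardinality of the set of atoms), then for every non-zero $n\in\omega$ the strict inequality $2^{\mathscr{O}_n(\fra)}<2^{\mathscr{O}_{n+1}(\fra)}$ holds. What remains is to move this configuration from $\mathsf{ZFA}$ into $\mathsf{ZF}$.

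The first step is to observe that the assertion to be transferred is of bounded rank over $A$. For each $n$, $\mathscr{O}_n(A)\subseteq\fin(A)^n$ sits at rank $\leqslant\operatorname{rank}(A)+\omega$, so $\mathscr{P}(\mathscr{O}_n(A))$ and any purported injection between two such power sets live below some fixed ordinal $\alpha$ above $\operatorname{rank}(A)$, uniformly in $n$. Consequently the whole conjunction
\[
\exists\fra\,\bigl(\fra\text{ is infinite and }\forall n\geqslant 1\;2^{\mathscr{O}_n(\fra)}<2^{\mathscr{O}_{n+1}(\fra)}\bigr)
\]
is a boundable statement in the sense of the Jech--Sochor theorem (a countable conjunction of statements each concerning the rank-$\alpha$ part of the universe, witnessed by the single set $A$).

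The second step is to invoke the Jech--Sochor embedding theorem (see, e.g., \cite[Theorem~17.2]{Halbeisen} or the version used throughout the literature on $\mathsf{ZF}$ independence via Fraenkel models): for any ordinal $\alpha$ there is a model $\calM$ of $\mathsf{ZF}$ together with an $\in$-embedding of the rank-$\alpha$ part of $\mathcal{V}_F$ into $\calM$ that preserves truth of statements of the form above. Applying this with $\alpha$ large enough to absorb $\mathscr{P}(\mathscr{O}_n(A))$ for all $n$ simultaneously yields, inside $\calM$, an infinite set whose cardinality $\fra$ satisfies $2^{\mathscr{O}_n(\fra)}<2^{\mathscr{O}_{n+1}(\fra)}$ for every non-zero $n$.

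There is no real obstacle here: the only thing to be careful about is the uniform bound on ranks so that a single $\alpha$ handles the whole $\omega$-sequence of inequalities at once, but this follows from $\bigcup_{n\in\omega}\mathscr{O}_n(A)$ already lying at rank $\operatorname{rank}(A)+\omega$. Hence the corollary follows at once, and the proof can reasonably be written in the single sentence the author gives.
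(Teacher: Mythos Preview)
Your proposal is correct and takes essentially the same approach as the paper: invoke Lemma~\ref{fb} and then apply the Jech--Sochor transfer theorem. Your added discussion of why the statement is boundable (a single ordinal $\alpha$ suffices because $\mathscr{P}(\mathscr{O}_n(A))$ has rank bounded uniformly in $n$ above $A$) is a helpful amplification of the one-line proof, not a different route.
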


\begin{theorem}
	It is consistent with $\mathsf{ZF}$ that there is an infinite cardinal $\mathfrak{a}$ such that 
	\[
	\begin{matrix}
		2^{\fin(\fra)} &&<&&2^{\fin(\fra)^2}&<&\cdots&<&2^{\fin(\fra)^n}&<&\cdots&<&2^{\fin(\fin(\fra))}\\
		\veq &&&&\veq&&&&\veq&&&&\veq\\
		2^{\mathscr{B}_1(\fra)} &<&2^{\mathscr{B}_2(\fra)}&<&2^{\mathscr{B}_3(\fra)}&<&\cdots&<&2^{\mathscr{B}_{2^n-1}(\fra)}&<&\cdots&<&2^{\mathscr{B}_{\fin}(\fra)}
	\end{matrix}
	\]

\end{theorem}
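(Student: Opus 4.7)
The plan is to realize the whole configuration in the basic Fraenkel model $\mathcal{V}_{\mathrm{F}}$ with $\fra = |A|$, where $A$ is the set of atoms, and then pull the result back to a genuine $\mathsf{ZF}$-model by the Jech--Sochor theorem, exactly as was done in passing from Lemma~\ref{fb} to Corollary~\ref{per}. The vertical equalities in the array are already $\mathsf{ZF}$-theorems: Corollary~\ref{finb} gives $2^{\fin(\fra)^n} = 2^{\mathscr{B}_{2^n-1}(\fra)}$ and Corollary~\ref{finfinfin} gives $2^{\fin(\fin(\fra))} = 2^{\mathscr{B}_{\fin}(\fra)}$. So it suffices to verify in $\mathcal{V}_{\mathrm{F}}$ all the strict inequalities along the bottom row; the strict inequalities along the top row are then forced.

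First I would record the identity $2^{\mathscr{O}_m(\fra)} = 2^{\mathscr{B}_m(\fra)}$, which holds in $\mathsf{ZF}$ for every $m \geqslant 1$: Fact~\ref{bf} gives $2^{\mathscr{B}_m(\fra)} \leqslant 2^{\mathscr{O}_m(\fra)}$, while the chain $2^{\mathscr{O}_m(\fra)} \leqslant (2^{\mathscr{O}_m(\fra)})^{\aleph_0} = 2^{\mathscr{B}_m(\fra)}$ from Theorem~\ref{hyf08} delivers the reverse. With this identity in hand, Lemma~\ref{fb}, which proves $2^{\mathscr{O}_m(\fra)} < 2^{\mathscr{O}_{m+1}(\fra)}$ in $\mathcal{V}_{\mathrm{F}}$ for every $m \geqslant 1$, immediately yields
\[
2^{\mathscr{B}_1(\fra)} < 2^{\mathscr{B}_2(\fra)} < 2^{\mathscr{B}_3(\fra)} < \dots
\]
in $\mathcal{V}_{\mathrm{F}}$, which is every strict inequality between consecutive terms of the bottom row.

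For the rightmost strict inequality $2^{\mathscr{B}_m(\fra)} < 2^{\mathscr{B}_{\fin}(\fra)}$, valid for every $m \geqslant 1$, I would slip $2^{\mathscr{B}_{m+1}(\fra)}$ in between:
\[
2^{\mathscr{B}_m(\fra)} < 2^{\mathscr{B}_{m+1}(\fra)} \leqslant 2^{\mathscr{B}_{\fin}(\fra)},
\]
where the strict step has just been established and the final step comes from the trivial inclusion $\mathscr{B}_{m+1}(A) \subseteq \mathscr{B}_{\fin}(A)$. Together with the vertical equalities, this completes the displayed chain in $\mathcal{V}_{\mathrm{F}}$.

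The essential work has already been done in Lemma~\ref{fb}; what remains is genuinely routine. The only real obstacle is the final $\mathsf{ZFA}$-to-$\mathsf{ZF}$ transfer, handled in the standard way by the Jech--Sochor theorem as in Corollary~\ref{per}.
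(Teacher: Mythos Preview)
Your proposal is correct and follows essentially the same route as the paper. The paper's one-line proof cites Lemma~\ref{fin}, Theorem~\ref{hyf08}, and Corollaries~\ref{finfinfin} and~\ref{per}; your argument unpacks precisely these ingredients (using Corollary~\ref{finb} in place of the equivalent combination Lemma~\ref{fin}~$+$~Theorem~\ref{hyf08}, and Lemma~\ref{fb}~$+$~Jech--Sochor in place of the already-transferred Corollary~\ref{per}), and you make explicit the easy step $2^{\mathscr{B}_{m+1}(\fra)}\leqslant 2^{\mathscr{B}_{\fin}(\fra)}$ needed for the rightmost inequality, which the paper leaves implicit.
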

\begin{proof}
	Immediately follows from Lemma~\ref{fin}, Theorem~\ref{hyf08}, and Corollaries~\ref{finfinfin} and~\ref{per}.
\end{proof}

\begin{lemma}\label{hs20}
	Let $A$ be the set of atoms of $\mathcal{V}_{\mathrm{F}}$.  In $\mathcal{V}_{\mathrm{F}}$, for all non-zero natural numbers $n$, $\mathscr{B}_n(A)$ is dually Dedekind finite, that is, every surjection of $\mathscr{B}_n(A)$ onto $\mathscr{B}_n(A)$ is injective.
	
\end{lemma}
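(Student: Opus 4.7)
Let $f\colon\mathscr{B}_n(A)\to\mathscr{B}_n(A)$ be a surjection in $\mathcal{V}_{\mathrm{F}}$ with finite support $E\in\fin(A)$. The plan is to show $f$ is injective by analysing the action of $G_E$, the group of permutations of $A$ pointwise fixing $E$. The starting observation is $\bigcup\mathrm{ns}(f(P))\subseteq E\cup\bigcup\mathrm{ns}(P)$ for every $P$: the set $E\cup\bigcup\mathrm{ns}(P)$ supports both $f$ and $P$, hence $f(P)$, while the minimal support of any partition $Q\in\mathscr{B}_n(A)$ equals $\bigcup\mathrm{ns}(Q)$, since a transposition moving an atom of a non-singleton block of $Q$ into any smaller putative support destroys that block. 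Setting $N(P)=|\bigcup\mathrm{ns}(P)\setminus E|$, this gives $N(f(P))\leqslant N(P)$.

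I would then classify the $G_E$-orbits on $\mathscr{B}_n(A)$ by the $E$-type of a representative, namely the unordered multiset $\{(B_i\cap E,|B_i\setminus E|)\mid 1\leqslant i\leqslant n\}$ of pairs attached to the non-singleton blocks, noting that only finitely many $E$-types satisfy $N\leqslant N_0$ for any fixed $N_0$. Since $f$ is $G_E$-equivariant, it induces a surjection $F$ on orbits; $F$ sends each finite set $\{\tau\mid N(\tau)\leqslant N_0\}$ into itself, hence bijects it, so $F$ is a globally injective, $N$-preserving bijection on orbits, and $S:=\bigcup\mathrm{ns}(P)\setminus E=\bigcup\mathrm{ns}(f(P))\setminus E$ for every $P$. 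To upgrade the induced $G_E$-equivariant surjection $f{\rstr}[\tau]\colon[\tau]\to[F(\tau)]$ to a bijection, observe that both $\mathrm{Stab}_{G_E}(P)$ and $\mathrm{Stab}_{G_E}(f(P))$ equal $H_Q\times\mathrm{Sym}(A\setminus(E\cup S))$ inside $G_E\cong\mathrm{Sym}(A\setminus E)$, where $H_Q\leqslant\mathrm{Sym}(S)$ is the image under restriction to $S$ (for $Q=P$ and $Q=f(P)$ respectively); the always-valid inclusion $\mathrm{Stab}_{G_E}(P)\subseteq\mathrm{Stab}_{G_E}(f(P))$ projects to $H_P\subseteq H_{f(P)}$. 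Since $F$ permutes a finite set of orbits at each level, every orbit lies on an $F$-cycle $\tau_1\to\cdots\to\tau_k\to\tau_1$; setting $P_{i+1}=f(P_i)$ for $P_1\in[\tau_1]$ produces an ascending chain $H_{P_1}\subseteq\cdots\subseteq H_{P_{k+1}}$ in $\mathrm{Sym}(S)$. Any $g\in G_E$ with $g\cdot P_1=P_{k+1}$ satisfies $g[S]=S$ (since $P_1$ and $P_{k+1}$ share the same non-$E$ atoms in their non-singleton blocks), so $g{\rstr}S\in\mathrm{Sym}(S)$ conjugates $H_{P_1}$ onto $H_{P_{k+1}}$. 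Thus $|H_{P_1}|=|H_{P_{k+1}}|$, forcing equality throughout the finite chain, and in particular $\mathrm{Stab}_{G_E}(P)=\mathrm{Stab}_{G_E}(f(P))$; hence $f{\rstr}[\tau]$ is a bijection, and combined with $F$ bijective on orbits, $f$ itself is a bijection.

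The main obstacle is the cycle-closure step: one must verify carefully that all iterates $P_1,\dots,P_{k+1}$ share the same non-$E$ atom set $S$ (which relies on $F$ being $N$-preserving from the orbit-counting step), so that the chain of subgroup inclusions lives inside a single finite group $\mathrm{Sym}(S)$ where a cardinality argument applies, and that equality of the projections in $\mathrm{Sym}(S)$ lifts back to equality of the full stabilizers in $G_E$.
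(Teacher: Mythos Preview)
Your argument has a genuine gap at the orbit-level step. You assert that since the induced map $F$ on $G_E$-orbits is surjective and sends each finite set $S_{N_0}=\{\tau\mid N(\tau)\leqslant N_0\}$ into itself, $F$ must biject each $S_{N_0}$. This inference is invalid: a globally surjective self-map that carries a finite subset into itself need not restrict to a surjection of that subset onto itself. Nothing you have proved rules out $F$ behaving like the map on $\omega$ given by $0\mapsto 0$, $k+1\mapsto k$, which is surjective, non-increasing in the obvious grading, and sends each $\{0,\dots,N_0\}$ into itself without bijecting it. The set of $G_E$-orbits here is countably infinite (orbits are parametrized by $E$-types, i.e., finite combinatorial data), so global surjectivity of $F$ gives no injectivity for free, and the inequality $N(F(\tau))\leqslant N(\tau)$ is too coarse to force preimages of $S_{N_0}$ back into $S_{N_0}$.

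This gap undermines the rest of your argument, since your stabilizer computation and the chain $H_{P_1}\subseteq\cdots\subseteq H_{P_{k+1}}$ inside a \emph{single} finite $\mathrm{Sym}(S)$ both rest on the equality $\bigcup\mathrm{ns}(P)\setminus E=\bigcup\mathrm{ns}(f(P))\setminus E$, which you derived precisely from $N$-preservation. The paper circumvents the problem by replacing the scalar $N$ with a finer preorder $\sqsubseteq$ on $\mathscr{B}_n(A)$ (essentially: $Q\sqsubseteq P$ when each block of $\mathrm{ns}(Q)$, stripped of $E$, is a union of such stripped blocks of $\mathrm{ns}(P)$), shows $f(P)\sqsubseteq P$ by the same support reasoning you use, and proves that $\sqsubseteq$-chains without repetition have length at most $(n+1)^{|E|+1}$. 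Surjectivity then lets one pull $P$ back along $f$ to a $\sqsubseteq$-increasing sequence of preimages; the length bound forces a repeat, yielding $f^{(m)}(P)=P$ for some $m>0$ and hence injectivity directly. Your stabilizer/cycle idea is attractive, but to make it work you would need an analogous bounded-chain (or finite-down-set) statement at the orbit level, which your grading by $N$ alone does not supply.
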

\begin{proof}
	Let $n$ be a non-zero natural number. Take an arbitrary surjection $f\in\mathcal{V}_\mathrm{F}$ from $\mathscr{B}_n(A)$ onto $\mathscr{B}_n(A)$.
	In order to prove the injectivity of $f$, it suffices to show that
	\begin{equation}\label{s024}
		\text{for all $P\in\mathscr{B}_n(A)$ there is an $m>0$ such that $f^{(m)}(P)=P$.}
	\end{equation}
	Let $E$ be a finite support of $f$. For every $P\in\mathscr{B}_n(A)$ and every $p\in P$, let $p_E=p\setminus E$ and let 
	\[
	P_E=\{p_E\mid p\in \mathrm{ns}(P)\text{ and }p_E\neq \varnothing\}.
	\] 
	Clearly, $|P_E|\leqslant n$ and $P_E$ is a partition of a finite subset of $A\setminus E$. We define  a preordering  $\sqsubseteq$ on $\mathscr{B}_n(A)$ by
	\[
	Q\sqsubseteq P \qquad  \text{if and only if} \qquad \text{every block of  } Q_E \text{ is a union of some blocks of } P_E.
	\]
	
	
	
	We claim that for every $P\in\mathscr{B}_n(A)$,  $f(P)\sqsubseteq P$.
	Assume towards a contradiction that ${f(P)}\not\sqsubseteq{P}$ for some $P\in\mathscr{B}_n(A)$. Then there is a block $q_E$ of  $f(P)_E$ such that either $q_E$ contains a singleton block $\{a\}$ of $P$ or there is a $p_E\in P_E$ such that both $p_E\cap q_E$ and $p_E\setminus q_E$ are non-empty. In the first case, since $q_E$ is finite, there is a $b\in A\setminus (E\cup q_E)$ such that $\{b\}$ is a singleton block of $P$, so the transposition that swaps $a$ and $b$ fixes $P$ but move $f(P)$, contradicting that $E$ is a support of $f$. In the latter case, the transposition that swaps an element of $p_E\cap q_E$ with an element of $p_E\setminus q_E$ fixes $P$ but move $f(P)$, contradicting again that $E$ is a support of $f$.
	
	We claim that for every $P\in\mathscr{B}_n(A)$,  
	\[
	|\{Q\in\mathscr{B}_n(A)\mid Q_E= P_E\}|\leqslant (n+1)^{|E|}.
	\] 
	Let $P\in \mathscr{B}_n(A)$. 
	Define a function $g:\{Q\in\mathscr{B}_n(A)\mid Q_E= P_E\}\to (P_E\cup \{\varnothing\})^{E}$ by 
	$g(Q)(e)= q_E$,   where $q$ is the unique element of $Q$ such that $e\in q$. It is straightforward to see that $g$ is an injection.
	
	For every $P,Q\in \mathscr{B}_n(A)$, if $Q\sqsubseteq P$ and $|Q_E|=|P_E|$, then $Q_E=P_E$. As a consequence, since $|P_E|\leqslant n$, every $\sqsubseteq$-chain without repetition must have length at most $(n+1)^{|E|+1}$.
	
	Finally, we prove $\eqref{s024}$ as follows. 
	Let $h$ be a function from $(n+1)^{|E|+1}+1$ into $\mathscr{B}_n(A)$ such that $h(0)=P$ and $h(i)=f(h(i+1))$ for all $i<(n+1)^{|E|+1}$.
	Such an $h$ exists since $f$ is surjective. Clearly, for all $i\leqslant (n+1)^{|E|+1}$, $f^{(i)}(h(i))=P$.
	Since $h$ is a $\sqsubseteq$-chain, we can find $i,j\leqslant (n+1)^{|E|+1}$ such that $i<j$ and $h(i)=h(j)$. Now, if we take $m=j-i$, then we have $m>0$ and
	\[
	f^{(m)}(P)=f^{(j-i)}(P)=f^{(j-i)}(f^{(i)}(h(i)))=f^{(j)}(h(j))=P,
	\]
	which completes the proof.
\end{proof}

Now the next corollary immediately follows from Lemma~\ref{hs20} and the Jech--Sochor theorem.

\begin{corollary}\label{hs30}
	It is consistent with $\mathsf{ZF}$ that there is an infinite cardinal $\mathfrak{a}$ such that 	for all non-zero natural numbers $n$,	$\mathscr{B}_n(A)$ is dually Dedekind finite.
\end{corollary}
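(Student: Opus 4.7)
The plan is to transfer the conclusion of Lemma~\ref{hs20} from the permutation model $\mathcal{V}_{\mathrm{F}}$ to a symmetric model of $\mathsf{ZF}$ via the Jech--Sochor embedding theorem, exactly in the same spirit as the deduction of Corollary~\ref{per} from Lemma~\ref{fb}. This is essentially a one-line application, so the work is simply to verify that the statement in question falls within the scope of the transfer theorem.

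First I would recall that the Jech--Sochor theorem provides, for any ordinal $\alpha$ and any permutation model $\mathcal{V}$, a symmetric $\mathsf{ZF}$-model $\mathcal{N}$ together with an $\in$-isomorphism of $(V_\alpha)^{\mathcal{V}}$ onto a transitive subclass of $\mathcal{N}$. Thus any statement that can be phrased as an assertion about sets of rank bounded by some fixed ordinal above a chosen witness set transfers from $\mathcal{V}$ to $\mathcal{N}$.

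Next I would observe that, for each fixed non-zero $n$, the property ``$\mathscr{B}_n(A)$ is dually Dedekind finite'' quantifies only over surjections of $\mathscr{B}_n(A)$ onto itself, and all such surjections lie at a rank bounded by a fixed finite iterate of the powerset operation applied to $A$. The universal quantification over $n \in \omega$ adds no complication, since $\omega$ is absolute, and the entire assertion can be expressed as a statement about sets of rank at most $\rank(A) + \omega$. One therefore chooses $\alpha$ large enough that $(V_\alpha)^{\mathcal{V}_{\mathrm{F}}}$ contains $A$ together with $\mathscr{B}_n(A)$ and the full set of functions from $\mathscr{B}_n(A)$ to $\mathscr{B}_n(A)$ for every $n$.

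Applying the Jech--Sochor theorem then yields a $\mathsf{ZF}$-model $\mathcal{N}$ in which the image of $A$ under the embedding is an infinite set whose cardinal $\mathfrak{a}$ satisfies, for each non-zero natural number $n$, that every surjection from $\mathscr{B}_n(\mathfrak{a})$ onto $\mathscr{B}_n(\mathfrak{a})$ is injective. There is no real obstacle in this argument; the only point that deserves explicit mention is the boundedness check just described, which guarantees that Lemma~\ref{hs20} is a legitimate input for the Jech--Sochor transfer.
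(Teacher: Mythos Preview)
Your proposal is correct and follows exactly the paper's own approach: the corollary is derived immediately from Lemma~\ref{hs20} together with the Jech--Sochor theorem. You have simply spelled out the boundedness verification that makes the transfer legitimate, which the paper leaves implicit.
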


\begin{corollary}
	It is consistent with $\mathsf{ZF}$ that there is an infinite cardinal $\mathfrak{a}$ such that 	for all non-zero natural numbers $n$,
	$\mathscr{B}_{n}(\mathfrak{a})\cdot\aleph_0\nleqslant^{\ast}\mathscr{B}_{n}(\mathfrak{a})$ and $\fin(\fra)^n\nleqslant^{\ast}\mathscr{B}_{2^n-1}(\fra)$.
\end{corollary}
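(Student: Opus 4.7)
The plan is to work in the basic Fraenkel model $\mathcal{V}_\mathrm{F}$, where by Lemma~\ref{hs20} $\mathscr{B}_m(A)$ is dually Dedekind finite for every $m\geqslant 1$, and then invoke the Jech--Sochor theorem to transfer the consistency result to $\mathsf{ZF}$. Both non-inequalities will be established by the same manoeuvre: given a putative witnessing surjection, post-compose with natural maps and extend arbitrarily to produce a non-injective surjection of some $\mathscr{B}_m(A)$ onto itself, contradicting Lemma~\ref{hs20}.

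For $\mathscr{B}_n(\fra)\cdot\aleph_0\nleqslant^{\ast}\mathscr{B}_n(\fra)$, from a surjection $\sigma\colon S\to\mathscr{B}_n(A)\times\omega$ with $S\subseteq\mathscr{B}_n(A)$ I would fix some $P_0\in\mathscr{B}_n(A)$ and define $\rho(P)=\pi_1(\sigma(P))$ for $P\in S$ and $\rho(P)=P_0$ otherwise. Then $\rho\colon\mathscr{B}_n(A)\to\mathscr{B}_n(A)$ is surjective via $\pi_1\circ\sigma$, and since $\sigma$ covers all of $\{Q\}\times\omega$ for each $Q\in\mathscr{B}_n(A)$, every fibre $(\pi_1\circ\sigma)^{-1}[\{Q\}]$ is infinite, so $\rho$ collapses many points and cannot be injective.

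For $\fin(\fra)^n\nleqslant^{\ast}\mathscr{B}_{2^n-1}(\fra)$, from a surjection $\sigma\colon S\to\fin(A)^n$ with $S\subseteq\mathscr{B}_{2^n-1}(A)$ I would compose with the bijection $g\colon\fin(A)^n\to\mathscr{O}_{2^n-1}(A)$ of Lemma~\ref{fin} and with the map $\tau(\bar p)=(\ran(\bar p)\setminus\{\varnothing\})\cup[A\setminus\bigcup\ran(\bar p)]^1$ from Fact~\ref{bf}. Let $T=\{P\in S:\tau(g(\sigma(P)))\in\mathscr{B}_{2^n-1}(A)\}$, fix $P_0\in\mathscr{B}_{2^n-1}(A)$, and set $\rho(P)=\tau(g(\sigma(P)))$ for $P\in T$ and $\rho(P)=P_0$ otherwise. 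Surjectivity of $\rho$ is immediate because every $Q\in\mathscr{B}_{2^n-1}(A)$ has $(2^n-1)!$ preimages under $\tau$, each of them hit by the surjection $g\circ\sigma$.

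The main obstacle is to verify $\rho$ is not injective \emph{uniformly in $n$}, especially for $n=1$, where $\tau$ is essentially a bijection on $\tau^{-1}[\mathscr{B}_1(A)]$ so a fibre-size argument alone would fail. The resolution is that the ``defect'' set $D=\mathscr{O}_{2^n-1}(A)\setminus\tau^{-1}[\mathscr{B}_{2^n-1}(A)]$ is always infinite, since it contains every tuple $\bar p\in\mathscr{O}_{2^n-1}(A)$ whose non-empty entries are all singletons (so that $\tau(\bar p)$ has no non-singleton block at all). Pulling $D$ back through the surjection $g\circ\sigma$ shows that $S\setminus T$ is infinite, so $\rho$ sends infinitely many points to $P_0$ and is therefore not injective, contradicting Lemma~\ref{hs20}.
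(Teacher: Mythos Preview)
Your proposal is correct and follows essentially the same strategy as the paper: both arguments reduce the two non-inequalities to a violation of the dual Dedekind finiteness of $\mathscr{B}_m(A)$ in $\mathcal{V}_{\mathrm{F}}$ (Lemma~\ref{hs20}/Corollary~\ref{hs30}) by manufacturing a non-injective self-surjection of $\mathscr{B}_m(A)$ from a putative witnessing surjection. The only difference is in the auxiliary map used for the second part: the paper cites Lemma~\ref{bfin} to obtain a non-injective surjection from $\fin(A)^n$ onto $\mathscr{B}_{2^n-1}(A)$, whereas you route through the bijection of Lemma~\ref{fin} and the surjection of Fact~\ref{bf}; your explicit handling of the $n=1$ case via the infinite defect set $D$ is a detail the paper's terse proof leaves implicit.
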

\begin{proof}
	Note that, by Lemma~\ref{bfin}, for all infinite sets $A$ and all non-zero natural numbers $n$, there are non-injective surjections from $\mathscr{B}_{n}(A)\times\omega$ onto $\mathscr{B}_{n}(A)$ and from $\fin(A)^n$ onto $\mathscr{B}_{2^n-1}(A)$. Hence this corollary follows from Corollary~\ref{hs30}. 
\end{proof}

\begin{lemma}\label{seq}
	Let $A$ be the set of atoms of $\mathcal{V}_{\mathrm{F}}$, and let $\mathfrak{a}=|A|$. In $\mathcal{V}_{\mathrm{F}}$,
	\[
	\fin(\fin(\fra))\not\leqslant^\ast\mathscr{B}_{\fin}(\fra).
	\]
\end{lemma}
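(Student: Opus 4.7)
The plan is to reduce, via Lemma~\ref{shen01}, to showing $\seq(\fra) \not\leqslant^\ast \mathscr{B}_{\fin}(\fra)$ in $\mathcal{V}_{\mathrm{F}}$. Since $\seq(\fra) \leqslant^\ast \fin(\fin(\fra))$ by that lemma, any surjection from a subset of $\mathscr{B}_{\fin}(A)$ onto $\fin(\fin(A))$ would compose to give a surjection onto $\seq(A)$. So suppose towards a contradiction that $f \colon D \to \seq(A)$ is a surjection with $D \subseteq \mathscr{B}_{\fin}(A)$, and fix a finite support $E \in \fin(A)$ of $f$.

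Then I would choose distinct atoms $a_1,\dots,a_k \in A \setminus E$ with $k > |E|$, set $\bar{s} = \langle a_1,\dots,a_k\rangle$, and pick some $P \in D$ with $f(P) = \bar{s}$. First, observe that $E \cup \bigcup \mathrm{ns}(P)$ supports $P$ (any permutation fixing it pointwise preserves every non-singleton block and merely permutes the singleton blocks), and therefore also supports $\bar{s} = f(P)$. A transposition argument then forces $a_i \in \bigcup \mathrm{ns}(P)$ for every $i$: otherwise, swapping $a_i$ with any atom outside $E \cup \bigcup \mathrm{ns}(P) \cup \{a_i\}$ would fix both $E$ pointwise and $P$, yet would move $\bar{s}$.

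Let $p_i$ be the unique non-singleton block of $P$ containing $a_i$. A second transposition argument, applied to any $b \in p_i \setminus (E \cup \{a_i\})$, swaps $a_i$ and $b$ inside $p_i$ without affecting any other block, again producing a permutation that fixes $E$ pointwise and $P$ while moving $\bar{s}$; hence $p_i \setminus \{a_i\} \subseteq E$. Since the $p_i$ are pairwise disjoint and each $p_i \cap E$ is non-empty (because $|p_i| \geqslant 2$), the sets $p_i \cap E$ form $k$ pairwise disjoint non-empty subsets of $E$, giving $k \leqslant |E|$, which contradicts $k > |E|$.

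The main care-point is the support bookkeeping: what makes the argument go through is recognizing that an injective sequence $\bar{s}$ is sufficiently rigid, namely each atom in $\ran(\bar{s})$ is individually pinned down by the stabilizer of $\bar{s}$, whereas the atoms inside a single non-singleton block of a finitary partition may be freely permuted. This disparity, together with the disjointness of the non-singleton blocks, is exactly what drives the counting bound $k \leqslant |E|$. I do not anticipate a deeper obstacle; once the target is reduced from $\fin(\fin(\fra))$ to $\seq(\fra)$, the rest is a direct transposition calculation of the kind already used in the proofs of Lemmas~\ref{fb} and~\ref{hs20}.
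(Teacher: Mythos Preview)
Your proposal is correct and essentially identical to the paper's proof: both reduce via Lemma~\ref{shen01} to $\seq(\fra)\not\leqslant^\ast\mathscr{B}_{\fin}(\fra)$, choose an injective sequence of atoms outside the support of length exceeding $|E|$, and use transpositions to force each entry $a_i$ into a non-singleton block $p_i$ with $p_i\setminus\{a_i\}\subseteq E$, yielding the contradictory bound $k\leqslant|E|$. The only point to make explicit is why the $p_i$ are pairwise \emph{distinct} (hence disjoint): this follows because $p_i\setminus\{a_i\}\subseteq E$ and $a_j\notin E$ force $a_j\notin p_i$ for $j\neq i$.
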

\begin{proof}
	By Lemma~\ref{shen01}, it suffices to show $\seq(\fra)\not\leqslant^\ast\mathscr{B}_{\fin}(\fra)$. Assume to the contrary that there exists a surjection $f\in \mathcal{V}_{\mathrm{F}}$ from $\mathscr{B}_{\fin}(A)$ onto $\mathrm{seq}^{1-1}(A)$. Let $E$ be a finite support of $f$. Let $n=|E|$. Take an $s\in (A\setminus E)^{\underline{n+1}}$. Since $f$ is surjective, there is a $P\in \mathscr{B}_{\fin}(A)$  such that $f(P)=s$. Let $\sim_P$ be the equivalence relation on $A$ induced by $P$.
	
	We claim that for every $a\in\ran(s)$, $a\sim b$ for no $b\in A\setminus (E\cup\{a\})$ and $a\sim_P e$ for some $e\in E$. Suppose not; then one of the following two cases occurs.    The first case is that there is a $b\in  A\setminus (E\cup\{a\})$ such that $a\sim_P b$. Then the transposition that swaps $a$ and $b$ will fix $P$ and move $s$, contradicting that $E$ is a support of $P$.
	Otherwise, $\{a\}\in P$. Since $P$ in finitary, there is a $c\in A\setminus (E\cup\{a\})$ such that $\{c\}\in P$.  Then the transposition that swaps $a$ and $c$ will fix $P$ and move $s$, contradicting again that $E$ is a support of $P$.
	
	Hence, the function that maps each $a\in\ran(s)$ to some $e\in E$ with $a\sim_P e$ is an injection from $\ran(s)$ into $E$, which implies $n+1=|\ran(s)|\leqslant |E|=n$, a contradiction.
\end{proof}

Now the next corollary immediately follows from Lemma~\ref{seq} and the Jech--Sochor theorem.

\begin{corollary}
	It is consistent with $\mathsf{ZF}$ that there is an infinite cardinal $\fra$ such that $\fin(\fin(\fra))\not\leqslant^\ast\mathscr{B}_{\fin}(\fra)$.
\end{corollary}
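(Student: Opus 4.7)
The plan is to apply exactly the same template used for Corollary~\ref{per}, Corollary~\ref{hs30}, and the corollary just preceding this one: invoke Lemma~\ref{seq} to obtain the desired statement inside the permutation model $\mathcal{V}_{\mathrm{F}}$, and then push the statement across to a model of $\mathsf{ZF}$ by the Jech--Sochor embedding theorem. Concretely, first I would note that in $\mathcal{V}_{\mathrm{F}}$, Lemma~\ref{seq} furnishes an infinite cardinal $\fra$, namely $\fra = |A|$ where $A$ is the set of atoms, such that $\fin(\fin(\fra))\not\leqslant^\ast\mathscr{B}_{\fin}(\fra)$.

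Next I would check that the negated statement has the form required for the Jech--Sochor theorem. The assertion ``there exists an infinite cardinal $\fra$ with $\fin(\fin(\fra))\not\leqslant^\ast\mathscr{B}_{\fin}(\fra)$'' is a statement about sets of rank bounded in terms of $A$: the witnessing surjection, if it existed, would be a subset of $\mathscr{B}_{\fin}(A)\times\fin(\fin(A))$, whose rank is bounded by a fixed finite ordinal above $\rank(A)$. Hence the statement is boundable, which is precisely the hypothesis of the Jech--Sochor theorem as it appears in \cite[Chapter~17]{Halbeisen}.

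Therefore, applying Jech--Sochor, I obtain a symmetric extension of a model of $\mathsf{ZF}$ in which the corresponding statement about an infinite cardinal $\fra$ (the image of $|A|$ under the embedding) holds, namely $\fin(\fin(\fra))\not\leqslant^\ast\mathscr{B}_{\fin}(\fra)$. This yields the desired consistency result.

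There is essentially no obstacle: the only thing to be careful about is confirming that the Jech--Sochor boundedness condition applies, which is routine since the operators $\fin$, $\mathscr{B}_{\fin}$, and the existence of a surjection between them are all absolute for transitive models above a fixed finite level of the cumulative hierarchy over $A$. The proof therefore reduces to a one-line appeal to Lemma~\ref{seq} together with Jech--Sochor, exactly as in the preceding corollaries.
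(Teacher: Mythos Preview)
Your proposal is correct and follows exactly the paper's approach: the paper simply states that the corollary follows immediately from Lemma~\ref{seq} and the Jech--Sochor theorem. Your additional discussion of the boundability hypothesis is a welcome elaboration, but the underlying argument is identical.
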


\providecommand{\WileyBibTextsc}{}
\let\textsc\WileyBibTextsc
\providecommand{\othercit}{}
\providecommand{\jr}[1]{#1}
\providecommand{\etal}{~et~al.}

\end{document}